\newtheorem{thm}{Theorem}
\newtheorem{lemma}[thm]{Lemma}
\newtheorem{prop}[thm]{Proposition}
\newtheorem{cor}[thm]{Corollary}
\newtheorem{rem}[thm]{Remark}
\newtheorem{example}{Example}[section]
\newcommand{\p}{{\mathbb P}}
\newcommand{\e}{{\mathbb E}}
\newcommand{\D}{{\mathrm d}}
\newcommand{\bs}{\boldsymbol}
\renewcommand{\a}{\alpha}
\newcommand{\ba}{{\bs \alpha}}
\newcommand{\R}{{\mathbb R}}
\newcommand{\C}{{\mathbb C}}
\newcommand{\diag}{{\rm diag}}
\renewcommand{\i}{{\rm i}}
\newcommand{\1}[1]{\mbox{\rm\large  1}_{\{#1\}}}
\newcommand{\mbcomment}[1]{\begin{list}{}{\leftmargin\parindent\rightmargin0pt\listparindent0pt\parsep1ex\labelwidth0pt\labelsep0pt}\item[]\small\sf${\color{red}\clubsuit\clubsuit\clubsuit}$~#1 \end{list}}
\newcommand{\mat}[1]{\boldsymbol{\bs #1}}
\newcommand{\vect}[1]{\boldsymbol{\bs #1}}
\begin{document}
\title[L\'evy process on a matrix-exponential time interval]{Fluctuation theory for one-sided L\'evy processes with a matrix-exponential time horizon}
\author[M.\ Bladt and J.\ Ivanovs]{Mogens Bladt and Jevgenijs Ivanovs}
\address{University of Copenhagen and Aarhus University}
\begin{abstract}
There is an abundance of useful fluctuation identities for one-sided L\'evy processes observed up to an independent exponentially distributed time horizon.
We show that all the fundamental formulas generalize to time horizons having matrix exponential distributions, and the structure is preserved.
Essentially, the positive killing rate is replaced by a matrix with eigenvalues in the right half of the complex plane which, in particular, applies to the positive root of the Laplace exponent and the scale function.
Various fundamental properties of thus obtained matrices and functions are established, resulting in an easy to use toolkit.
An important application concerns deterministic time horizons which can be well approximated by concentrated matrix exponential distributions.
Numerical illustrations are also provided.
\end{abstract}

\keywords{functions of matrices, rational Laplace transform, scale function, Wiener-Hopf factorization}
\maketitle

\section{Introduction}
A spectrally-negative L\'evy process is a natural generalization of the classical Cram\'er-Lundberg risk process. 
This class of processes allows for a rich fluctuation theory, which has been extensively applied to various insurance risk and financial models. %, but the scope of applications is much broader.
This theory, see~\cite{avram_review} for a recent review, is crucial in the analysis of risk processes in a wide range of settings including alternative ruin concepts, dividends, capital injections, loss-carry-forward taxation, and  optimal control. An important assumption here is that the time horizon $T$ is either infinite or is given by an independent exponential random variable.
Its rate $q\geq 0$ is often referred to as the killing rate of the L\'evy process.

In applications a deterministic time horizon is often needed. In this case Erlangization~\cite{usabel} is a popular approach, where the deterministic time is approximated using the Erlang distribution.
In fact, more general phase-type (PH) distributions can be used when the interest is in a random time horizon,
%\mbcomment{If the purpose is Erlangization it is not wise to go for anything else than Erlang due to their minimal coefficient of varfiation. Perhaps this should be clarified a bit here?} 
and the analysis often goes via fluid flow models and Markov additive processes~\cite{badescu, WH_PH}.
Matrix exponential (ME) distributions, equivalently distributions with rational transforms, form yet a broader class which has been successfully employed in modeling claim sizes~\cite{mordecki}.
Some preliminary investigations concerning ME time horizons have been carried out in~\cite[Ch.\ 6]{oscar}.
Importantly, there exist much more concentrated ME distributions as compared to PH (read Erlang) distributions for the same order~\cite{miklos}. 
Such concentrated ME distributions have been recently suggested by~\cite{horvath2020numerical} for a general  purpose numerical transform inversion method, and in various scenarios it has outperformed some classical methods. 

In this paper we provide a comprehensive fluctuation theory for spectrally-negative L\'evy processes with an independent ME time horizon~$T$.
Importantly, the classical formulas preserve their structure in this generalized context.
The basic idea is to analytically continue a given formula in the rate $q$ to the right half of the complex plane and then apply it to $-\mat T$, the negative of the ME generator matrix, using functional matrix calculus.
In particular, we extend the positive root $\Phi(q)$ of the Laplace exponent and the scale matrix $W_q(x)$ which play a fundamental role in fluctuation identities, and establish their essential properties.  
We also consider the Wiener-Hopf factorization and generalize various known formulas for the supremum/infimum and the terminal value.
Further identities can be extended in a rather straightforward way using the results and properties of this paper.
In the common case when the ME generator is diagonalizable, an ME formula can be seen as a linear combination of the respective classic formulas but for possibly complex killing rates.
This links to the inversion method of~\cite{horvath2020numerical} when their concentrated ME distributions are used.

Our theory brings new insights also for PH time horizons. In particular, we show that $-\Phi(-\mat T)$ and $W_{-\mat T}(x)$ correspond to some fundamental objects in the theory of Markov additive processes, where the same level process  is used in each phase. These convenient representations seem to be new. Furthermore, our analytic method results in neat formulas avoiding somewhat cumbersome time-reversed quantities as in~\cite{WH_PH}.

The remainder of this paper is organized as follows. This introductory section is concluded by providing some basic notation for ME distributions and L\'evy processes. In Section \ref{sec:toolkit} we present the general idea and the essential tools from the theory of functional matrix calculus. 
The next three sections deal with the natural extension of the fundamental fluctuation theory including scale functions and a variety of exit problems.
Section~\ref{sec:WH} presents the analogue of the classical Wiener--Hopf factorization and a suite of related identities.
The next two sections contain examples and numerics, respectively. We conclude by discussing an open problem concerning L\'evy processes observed at the epochs of an independent renewal process with ME inter-arrival times.

%\subsection{Notation}
%We shall make the following convention regarding vectors and matrices. A matrix will be denoted by a bold capital letter, and its elements by indexed versions of the corresponding lower case letters, like $\mat{A}=\{ a_{ij} \}_{i,j=1,..,p}$ or 
%$\mat{\Lambda}=\{ \lambda_{ij}\}_{i,j=1,...,p}$. Row vectors a denotes by bold lower case greak letters and their elements by indexed version of same letter (not bold), e.g. $\mat{\alpha}=(\alpha_i)_{i=1,...,p}$. Column vectors are denoted by bold Roman letters with elements being represented by the same indexed letter (not bold), e.g. $\mat{t}=(t_i)_{i=1,...,p}$. 

\subsection{Matrix-exponential distributions}
A positive random variable $T$ is said to have a matrix--exponential (ME) distribution if it has a density on the form
\[   f_T (x) = \mat{\alpha}e^{\mat{T}x}\mat{t},\qquad x\geq 0, \]
%and we write $\tau \sim \mbox{ME}_p(\mat{\alpha},\mat{T},\mat{t})$, 
%where $p$ denotes the dimension of the vectors involved (the $p$ may be suppressed if the dimension is of no concern). The triple $(\mat{\alpha},\mat{T},\mat{t})$ is called a representation for the ME distribution, and we refer to 
where $\mat{\alpha}$ is a row vector (bold Greek) with $p$ elements, $\mat{T}$ is a $p\times p$ matrix called the ME generator, and $\mat{t}$ is a column vector (bold Roman); all being real valued. 
According to~\cite[Thm. 4.2.9]{bladt_nielsen_book} it is always possible to choose a (canonical) representation $(\ba,\mat T,\mat t)$ satisfying $\mat{t}=-\mat{T}\mat{1}$ and hence $\ba\mat 1=1$, where $\mat 1$ is a vector of ones.
Nevertheless, it may be convenient to allow for an arbitrary ME representation and so we write $T \sim \mbox{ME}(\mat{\alpha},\mat{T},\mat t)$. 
We also define a column vector $\mat l=(-\mat T)^{-1}\mat t$, which is $\mat 1$ in the case of a canonical representation (this vector often appears as the $\mat l$ast vector in the formulas below).

The eigenvalues of $\mat{T}$ must have strictly negative real parts,
% and the dominating eigenvalue, i.e. the one with largest real part (and the one with largest multiplicity in case of a tie with other eigenvalues), is real~\cite[Cor. 4.1.5 and Thm. 4.1.6 ]{bladt_nielsen_book}. 
and the Laplace transform of $T$ is a rational function:
\[  \e e^{-s T} = \mat{\alpha}(s\mat{I}-\mat{T})^{-1}\mat{t},\qquad s\geq 0, \]
where $\mat I$ is the identity matrix.
The opposite implication is also valid: any positive distribution with a rational Laplace transform is an ME distribution,~\cite[Thm. 4.1.17]{bladt_nielsen_book}. 
%efined for all $s\in\mathbb{C}$ with real parts strictly larger than the dominating eigenvalue for $\mat{T}$. Then $L_\tau (s)$ is clearly a rational function in $s$~\cite[Thm. 4.1.10]{bladt_nielsen_book}, while 

The class of ME distributions is strictly larger than the class of phase-type (PH) distributions, where $\ba$ and $\mat T$ can be seen as a probability vector and a sub-intensity matrix of a transient Markov chain~$(J_t)_{t\geq 0}$.
The density of the life time of this chain $J$ started according to $\ba$ coincides with~$f_T$. In this case we write $T\sim\mbox{PH}(\ba,\mat T)$.
As mentioned before, a much better approximation of a positive deterministic number can be achieved using ME distributions as compared to Erlang or any other PH distribution for a fixed number of dimensions~$p$, see~\cite{miklos}.

\subsection{L\'evy processes}
Consider a L\'evy process $(X_t)_{t\geq 0}$, that is, a process with stationary and independent increments.
We assume  that $X$ is a spectrally-negative L\'evy process, meaning that $X$ has no positive jumps and its paths are not non-increasing a.s.
Such a process is characterized by the L\'evy-Khintchine formula providing an expression for the Laplace exponent $\psi(\theta)$:
\[\e e^{\theta X_t}=e^{\psi(\theta)t}, \qquad \psi(\theta)=\frac{1}{2}\sigma^2\theta^2+\gamma\theta+\int_{-\infty}^0\left(e^{\theta x}-1-\theta x\1{|x|<1}\right)\nu(\D x)\]
with $\Re(\theta)\geq 0$ and parameters $\gamma\in\R,\sigma\geq 0,\nu(\D x)$, where the latter is a measure satisfying $\int_{-\infty}^0 (x^2\wedge 1)\nu(\D x)<\infty$. 
%Here $c$ is the killing rate which is set to $0$ when considering non-defective processes, whereas for positive $c$ we have a process killed at an independent exponential time of rate~$c$.
%Processes of bounded variation on compacts necessarily have $\sigma=0$ and $\int_{-1}^0|x|\nu(\D x)<\infty$ which leads to a simplified expression
%\begin{equation}\label{eq:simplified}\psi(\theta)=d\theta+\int_{-\infty}^0\left(e^{\theta x}-1\right)\nu(\D x),\end{equation}
%where $d$ denotes a linear drift.
%It is often convenient to allow for defective or killed processes, in which case the killing time is assumed to be an independent exponential random variable $e_q$ of rate $q>0$.
%The corresponding Laplace exponent (computed on the event of no killing before~$t$) is given by $\psi(\theta)-q$.

Define the running supremum $\overline X_t=\sup_{s\leq t} X_s$ and analogously the running infimum $\underline X_t=\inf_{s\leq t}X_s$ for $t\geq 0$. The first passage times are denoted by
\[\tau_x^\pm=\inf\{t\geq 0: \pm X_t>\pm x\},\qquad x\geq 0.\]
In some cases we start the L\'evy process at $x\neq 0$ and then we write $\p_x$ and $\e_x$ to signify the starting value.

\section{The toolkit}\label{sec:toolkit}
\subsection{The basic idea}
We start with a simple observation which is fundamental for this work.
Let $X_{[0,t)}=(X_s)_{s\in[0,t)}$ for $t> 0$ denote a restriction of the path of $X$ to the time interval $[0,t)$. One may think of sending the path at time $t$ to some isolated absorbing state. %~$\dagger$.
 For an independent exponential random variable $e_q$ of rate $q>0$ and an arbitrary (measurable) non-negative functional $\phi$ there is the identity:
\begin{equation}\label{eq:tr0} g(q):=\e\phi(X_{[0,e_q)})=q\int_0^\infty e^{-q s}\e\phi(X_{[0,s)})\D s,\end{equation}
which is essentially the Laplace transform of $\e\phi(X_{[0,t)})$ in time variable.
Importantly, for a one-sided L\'evy process $X$ and a variety of functionals~$\phi$, this expression is explicit  (often in terms of the scale function defined below), see~\cite{avram_review} and references therein for a long list of formulas. The main reason is that killing at $e_q$, by the memory-less property,  preserves stationarity and independence of increments. %, see~\cite{ivanovs_killing} for further intuition and examples. 

Letting $T\sim\mbox{ME}(\ba,\mat T,\mat t)$ be independent of $X$ we get a similar matrix-form expression:
\[\e\phi(X_{[0,T)})=\ba \Big(\int_0^\infty e^{\mat T s}\e\phi(X_{[0,s)})\D s\Big) \mat t.\]
Laplace transforms are analytic in their region of convergence, so the function in~\eqref{eq:tr0} is analytic for $q\in \C_+=\{z\in\C:\Re(z)>0\}$. Since
the eigenvalues of $-\mat T$ are all in $\C_+$, \cite{bladt_nielsen_book},  we may then apply $g$ to the matrix $-\mat T$, as we next explain in Section \ref{subsec:matrix-calc}, to obtain 
\begin{equation}\label{eq:tool}\e\phi(X_{[0,T)})=\ba g(-\mat T)(-\mat T)^{-1}\mat t=\ba g(-\mat T)\mat l,\end{equation}
where $\mat l=(-\mat T)^{-1}\mat t$ is $\mat 1$ for a canonical representation. 
The main difficulty is to analytically continue the formula corresponding to $g(q)$ to the half plane $\C_+$, and to establish some important properties of the involved components.
%In particular, we show that $g(-\mat T)$ and all its components are real.

Furthermore, we allow $T$ to be a defective ME in the sense that the total mass $\p(T\in[0,\infty))=\ba\mat l\in (0,1]$ may be strictly less than~$1$. 
It is still true that the eigenvalues of $-\mat T$ are in $\C_+$. The basic example is $\mat T=\widehat {\mat T}-\delta\mat I$ for some $\delta>0$ and a non-defective $\widehat T\sim\mbox{ME}(\ba,\widehat{\mat T},\mat t)$, which allows to incorporate discounting into the formula:
\[\e\phi(X_{[0,T)})=\ba \Big(\int_0^\infty e^{-\delta s}e^{\widehat{\mat  T} s}\e\phi(X_{[0,s)})\D s\Big) \mat t=\e\big(e^{-\delta \widehat T}\phi(X_{[0,\widehat T)})\big).\]
Here we have independent exponential killing of rate $\delta>0$, whereas in phase-type setting it is natural to consider phase-dependent killing allowing to weigh time spent in each phase differently, see~\cite{ivanovs_killing} for further details and applications.
A somewhat related useful trick corresponds to considering $T=\widehat T\wedge e_\delta$ which has $\mbox{ME}(\ba,\widehat{\mat T}-\delta\mat I,\mat t+\delta\mat l)$ distribution, 
see Remark~\ref{rem:addTr}.

\subsection{Matrix calculus}\label{subsec:matrix-calc}
For a function $f:\C\mapsto \C$ analytic on some open connected domain $D\subset\C$ and a matrix $\mat{M}\in \C^{p\times p}$ {\it with eigenvalues in $D$} it is standard to define $f(\mat{M})$ via Cauchy's integral formula
\[f(\mat{M})= \frac{1}{2\pi\i}\oint_\gamma f(z)(z\mat{I}-\mat{M})^{-1} \D z,\]
where the closed simple curve $\gamma\in D$ contains the eigenvalues of $\mat{M}$ in its interior~\cite{matrixFunctions}. An equivalent definition of $f(\mat{M})$ proceeds via the Jordan canonical form of~$\mat{M}$.
In particular, in the diagonalizable case $\mat M=\mat P\diag_k(\lambda_k)\mat P^{-1}$ we have $f(\mat M)=\mat P\diag_k(f(\lambda_k))\mat P^{-1}$. 
In fact, both definitions can be given without requiring $D$ to be connected.
%The eigenvalues of $f(\mat{M})$ are given by $f(\lambda_i)$ with $\lambda_i$ being the eigenvalues of~$\mat{M}$, and so $f(\mat{M})$ is non-singular iff $f(\lambda_i)\neq 0$ for all eigenvalues $\lambda_i$ of~$\mat{M}$.
%\jicomment{Cauchy formula requires $D$ to be connected (added), but below the domain of $f\circ g$ need not be such.}
In the following we mostly work with the domain $\C_+$ and the matrix $-\mat T$, where $\mat T$ is an ME generator.
For convenience we state some basic calculus rules, see~\cite[Ch.\ 1]{matrixFunctions}.
\begin{lemma}\label{lem:calculus}
Let $\mat{M}\in\C^{p\times p}$ be a matrix and $f,g$ two analytic functions on their respective domains. Under the condition on eigenvalues belonging to the respective domain we have
\begin{itemize}
\item $f(\mat M)$ commutes with $g(\mat M)$,
\item the eigenvalues of $f(\mat M)$ are given by $f$ applied to the eigenvalues of $\mat M$,
\item $f(\mat{M})+g(\mat{M})=(f+g)(\mat{M})$ and $f(\mat{M})g(\mat{M})=(fg)(\mat{M})$,
\item $f(g(\mat{M})) = (f\circ g)(\mat{M})$,
\item for a power series $f(z)=\sum_{k=0}^\infty a_kz^k$ convergent on some domain it holds that \[f(\mat M)=\sum_{k=0}^\infty a_k\mat M^k,\]
\item for a $\sigma$-finite measure~$\mu$ and $f(z) = \int_0^\infty e^{z t}\mu (\D t)$ convergent on some domain it holds that \[f(\mat{M})=\int_0^\infty e^{\mat{M} t}\mu(\D t).\]
\end{itemize}  
%Let $f$ and $g$ be analytic functions defined on the same domain $D \subseteq \mathbb{C}$. If the spectrum for $\mat{M}$ is contained in $D$, then
%\[   f(\mat{M})+g(\mat{M})=(f+g)(\mat{M}) \ \ \mbox{and} \ \ f(\mat{M})g(\mat{M})=(fg)(\mat{M}) .  \]
%If the spectrum for $\mat{M}$ is contained in the domain of $g$ and the spectrum for $g(\mat{M})$ is contained in the domain of $f$, then
%\[   f(g(\mat{M})) = (f\circ g)(\mat{M}) .  \]
%Let $\mu$ be a finite measure and suppose that $f(z) = \int_0^\infty e^{z t}\mu (\D t)$ converges on~$D$.
%Then $f$ is analytic in $D$ and
%\[  f(\mat{M})=\int_0^\infty e^{\mat{M} t}\mu(\D t) .  \]
%if the spectrum for $\mat{M}$ is contained in $D$.
% \begin{enumerate}
% \item If $f(z)=g(z)+h(z)$ then $f(M)=g(M)+h(M)$,
% \item If $f(z)=g(z)h(z)$ then $f(M)=g(M)h(M)=h(M)g(M)$,
% \item If $f(z)=g(h(z))$ then $f(M)=g(h(M))$,
% \item If $f(z)=\int_0^\infty e^{z t}\mu(\D t)$ then $f(M)=\int_0^\infty e^{M t}\mu(\D t)$.
% \end{enumerate}
\end{lemma}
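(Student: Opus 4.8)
These are standard facts of the analytic functional calculus (see \cite{matrixFunctions}); the plan is to derive them in a unified way from the two descriptions of $f(\mat M)$ recalled above. Write $\lambda_1,\dots,\lambda_k$ for the distinct eigenvalues of $\mat M$ and let $m_j$ be the size of the largest Jordan block at $\lambda_j$. The first step is the \emph{polynomial representation}: if $p$ is the Hermite interpolating polynomial with $p^{(i)}(\lambda_j)=f^{(i)}(\lambda_j)$ for all $j$ and $0\le i<m_j$, then $f(\mat M)=p(\mat M)$. This follows from the Jordan form, on which $f$ acts on a block $\lambda_j\mat I+\mat N$ of size $m$ (with $\mat N$ nilpotent) as the truncated Taylor polynomial $\sum_{i=0}^{m-1}\frac{f^{(i)}(\lambda_j)}{i!}\mat N^i$, depending on $f$ only through the numbers $f(\lambda_j),\dots,f^{(m-1)}(\lambda_j)$; equivalently it follows from a residue evaluation of the Cauchy integral after a partial-fraction expansion of the resolvent $(z\mat I-\mat M)^{-1}$. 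Two of the bullets are then immediate. Since $f(\mat M)=p_f(\mat M)$ and $g(\mat M)=p_g(\mat M)$ are polynomials in $\mat M$, they commute; and if $\mat Mv=\lambda_jv$ then $f(\mat M)v=p_f(\mat M)v=p_f(\lambda_j)v=f(\lambda_j)v$, while comparison with the Jordan form — where $f(\mat M)$ is similar to a block-upper-triangular matrix carrying the values $f(\lambda_j)$ on the diagonal with the algebraic multiplicities of the $\lambda_j$ — shows that these are exactly the eigenvalues of $f(\mat M)$ counted with multiplicity.

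For the algebraic rules, linearity is clear from the Cauchy integral, and for the product one notes that, by the Leibniz rule applied to the interpolation data at each $\lambda_j$, the polynomial $p_fp_g$ matches $fg$ and its first $m_j-1$ derivatives at $\lambda_j$, so $(fg)(\mat M)=(p_fp_g)(\mat M)=f(\mat M)g(\mat M)$; alternatively one nests two curves $\gamma_1$ inside $\gamma_2$ around the eigenvalues and combines the resolvent identity $(z\mat I-\mat M)^{-1}(w\mat I-\mat M)^{-1}=(w-z)^{-1}\big[(z\mat I-\mat M)^{-1}-(w\mat I-\mat M)^{-1}\big]$ with a residue computation. The power-series bullet is a convergence statement: fixing a curve $\gamma$ around $\lambda_1,\dots,\lambda_k$ inside the disc of convergence, the partial sums $s_n(z)=\sum_{i=0}^na_iz^i$ tend to $f$ uniformly on $\gamma$, so $\sum_{i=0}^na_i\mat M^i=s_n(\mat M)\to f(\mat M)$ entrywise through the Cauchy integral. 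For the Laplace-type integral, the region of absolute convergence of $z\mapsto\int_0^\infty e^{zt}\mu(\D t)$ is a half-plane $\{\Re(z)<c\}$ that contains the eigenvalues of $\mat M$; picking $\gamma$ inside this half-plane and around the eigenvalues makes $e^{zt}(z\mat I-\mat M)^{-1}$ absolutely integrable against $|\D z|\otimes|\mu|(\D t)$, so Fubini interchanges $\oint_\gamma$ and $\int_0^\infty$ and gives $\int_0^\infty e^{\mat M t}\mu(\D t)=\frac{1}{2\pi\i}\oint_\gamma\big(\int_0^\infty e^{zt}\mu(\D t)\big)(z\mat I-\mat M)^{-1}\D z=f(\mat M)$.

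I expect the composition rule $f(g(\mat M))=(f\circ g)(\mat M)$ to be the step requiring the most care, partly because one must first check that its left-hand side makes sense: by the spectral-mapping bullet the eigenvalues of $g(\mat M)$ are $g(\lambda_1),\dots,g(\lambda_k)$, and the hypothesis — that the eigenvalues of $\mat M$ lie in the domain of $f\circ g$, i.e.\ in $\{z\in D_g:g(z)\in D_f\}$ — says exactly that $\lambda_j\in D_g$ and $g(\lambda_j)\in D_f$, so $g(\mat M)$ has its spectrum in $D_f$ and $f(g(\mat M))$ is defined. I would then use nested contours: a closed curve $\gamma_1$ lying together with its interior $D_1$ inside $D_g$, enclosing $\lambda_1,\dots,\lambda_k$, and taken small enough that $g(\overline{D_1})\subset D_f$; and a closed curve $\gamma_2\subset D_f$ enclosing the compact set $g(\overline{D_1})$, hence in particular the eigenvalues $g(\lambda_j)$ of $g(\mat M)$. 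For fixed $w$ on $\gamma_2$ the map $z\mapsto(w-g(z))^{-1}$ is analytic on $\overline{D_1}$; applying the functional calculus to it and using the already-proved linearity and multiplicativity to recognise the outcome as $(w\mat I-g(\mat M))^{-1}$ yields
\[(w\mat I-g(\mat M))^{-1}=\frac{1}{2\pi\i}\oint_{\gamma_1}\frac{(z\mat I-\mat M)^{-1}}{w-g(z)}\,\D z.\]
Substituting this into $f(g(\mat M))=\frac{1}{2\pi\i}\oint_{\gamma_2}f(w)(w\mat I-g(\mat M))^{-1}\D w$, interchanging the two (compact) contour integrals, and evaluating the inner one by Cauchy's formula, $\frac{1}{2\pi\i}\oint_{\gamma_2}\frac{f(w)}{w-g(z)}\D w=f(g(z))$ — valid since $g(z)$ lies inside $\gamma_2$ for every $z$ inside $\gamma_1$ — leaves $\frac{1}{2\pi\i}\oint_{\gamma_1}f(g(z))(z\mat I-\mat M)^{-1}\D z=(f\circ g)(\mat M)$. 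The one genuinely delicate point is arranging $\gamma_1$ and $\gamma_2$ so that all of these enclosure relations hold at once; should that be awkward, the purely algebraic fallback is to verify the identity block by block on the Jordan form, i.e.\ that the degree $m_j-1$ Taylor polynomial of $f\circ g$ at $\lambda_j$ is the composition of the corresponding truncated Taylor polynomials of $f$ and $g$ (the Fa\`a di Bruno formula).
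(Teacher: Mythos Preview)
Your proposal is correct. The paper does not actually prove this lemma: it is stated as a collection of standard calculus rules with the citation ``see~\cite[Ch.\ 1]{matrixFunctions}'' and nothing further. So there is no proof in the paper to compare against; you have supplied a self-contained argument where the authors simply defer to Higham's book.

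That said, your write-up is a sound outline of the classical treatment: the polynomial (Hermite interpolation / Jordan form) representation gives commutativity and the spectral mapping property immediately; linearity and multiplicativity follow either from Leibniz on the interpolation data or from the resolvent identity with nested contours; the power-series and Laplace-type bullets are handled by uniform convergence on a contour and Fubini respectively; and the composition rule is correctly identified as the most delicate step, with the nested-contour argument and the Jordan-block fallback both valid. The only minor caveat is in the Laplace-type bullet: the convergence region for a signed $\sigma$-finite $\mu$ need not be an open half-plane in general, but under the paper's standing assumption that $f$ is analytic on an open domain containing the eigenvalues your Fubini argument goes through on any contour $\gamma$ inside that domain, so this does not affect the conclusion.
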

The classic example of a matrix exponential is, of course, consistent with this theory.
%In the diagonalizable case $f(\mat M)$ is given by the respective similarity transformation of the diagonal matrix formed by $f$ applied to the eigenvalues, as we now discuss.
%\begin{proof}
%The first three results can be found in~\cite[Thm.\ 1.15(a),(b) and Thm.\ 1.17]{matrixFunctions}, respectively. Concerning the last result, since $e^{tz}$ is analytic in $D$, then by Morera's theorem
%\[  \oint_\gamma e^{tz}\D z = 0 .\]
%Then by Fubini
%\[ \oint_\gamma \int_0^\infty e^{z t}\mu(\D t)\D z = \int_0^\infty \oint_\gamma e^{z t}\D z \mu(\D t)
%  =  0 ,
%   \]
%so, again by Morera's theorem, $\int_0^\infty e^{z t}\mu(\D t)$ is analytic.   
%\end{proof}

%These results can be found in~\cite[Thm.\ 1.15(b) and Thm.\ 1.13(d)]{matrixFunctions}, respectively.

\subsection{The diagonalizable case and relation to an inversion method}
Here we assume that $\mat T$ is diagonalizable: 
\begin{equation}\label{eq:diagonalizable}-\mat T=\mat P\diag_k\big(\lambda_k\big)\mat P^{-1},\end{equation} where $\lambda_k\in\C_+$ are the eigenvalues of~$-\mat T$. Hence the $\mbox{ME}(\ba,\mat T,\mat t)$ density $f(x)$ is a sum of exponential terms:
\begin{equation}\label{eq:fexp}f(x)=\ba \mat P\diag_k\big(e^{-\lambda_k x}\big)\mat P^{-1}\mat t=\sum_{k=1}^p c_k e^{-\lambda_k x}.\end{equation}
It is easy to see that every such density corresponds to some ME representation with a diagonalizable exponent matrix.
According to~\eqref{eq:tool} we then have
\[\e\phi(X_{[0,T)})=\ba \mat P\diag_k\big(g(\lambda_k)/\lambda_k\big)\mat P^{-1}\mat t=\sum_{k=1}^p c_k  \frac{g(\lambda_k)}{\lambda_k}.\]
Here again it is necessary to have a formula for $g$ anlytically continued to $\C_+$, and this requires proving some basic properties of the involved terms.

Taking an ME distribution concentrated around some deterministic value, say~1, we may obtain an approximation of $\e\phi(X_{[0,1)})$.
In~\cite{miklos} one particular family of such distributions was explored. 
They provided an efficient numerical procedure for determining the respective parameters $(c_k,-\lambda_k)$ in~\eqref{eq:fexp}, and published online a list of such parameters up to the order $p=1000$. 
As demonstrated  by~\cite{horvath2020numerical} this leads to a general procedure for numerical inversion of Laplace transforms.

The diagonalizable case may initially seem much simpler.
With the right perspective and tools, however, the general case is not more difficult as we demonstrate below.
Nevertheless the diagonalizable case is convenient for numerics and in some other special situations, see also Section~\ref{sec:MEjumps}.

\section{The supremum and first passage}
The first passage process is crucial to the analysis of fluctuations of~$X$.
By the strong Markov property of $X$ and absence of positive jumps we see that $(\tau_x^+)_{x\geq 0}$ is a (possibly killed) subordinator, that is a L\'evy process with non-decreasing paths a.s.
Here we also use the well known fact $\tau_0^+=0$ a.s.
Let $-\Phi(-q)$ be its Laplace exponent:
\begin{equation}\label{eq:taux}\p(\tau_x^+<e_q)=\e e^{-q \tau_x^+}=e^{-\Phi(q)x},\qquad x\geq 0,q> 0.\end{equation}  %=
It  has the following L\'evy-Khintchine representation:
\[\Phi(q)=\overline q+\overline d q-\int_0^\infty(e^{-qx}-1)\overline\nu(\D x),\qquad q\in \C_+,\]
where the parameters satisfy $\overline q,\overline d\geq 0, \int_0^\infty(1\wedge x)\overline \nu(\D x)<\infty$.
Note that the class of functions with such a representation coincides with the class of Bernstein functions~\cite[Thm.\ 3.2]{schilling},
 and it is closely related to completely monotone functions. The parameters can be identified from the basic fact that  
  $\Phi(q)$ is the unique positive solution of $\psi(\cdot)=q$ for $q>0$~\cite[Sec.\ VII]{bertoin_book}.
  
 The above L\'evy-Khintchine representation of $\Phi(q)$ is analytic on $\C_+$ and hence we may apply it to  a matrix $-\mat T$ with eigenvalues in $\C_+$ to get
\begin{equation}\label{eq:phi}
\Phi(-\mat T)=\overline q\mat I-\overline d \mat T-\int_0^\infty(e^{\mat Tx}-\mat I)\overline\nu(\D x).
\end{equation}
Here we use Lemma~\ref{lem:calculus} and its slightly generalized version in regard to the integral.
 Now we can easily apply the basic idea in~\eqref{eq:tool} to the functional $\phi(X_{[0,t)})=\1{\tau_x^+<t}$ to get an explicit formula for $\p(\tau_x^+<T)$ with an independent $T\sim\mbox{ME}(\ba,\mat T,\mat t)$.
In fact, quite a bit more can be said. In this regard we state the following Lemma which is proven in Appendix.
\begin{lemma}\label{lem:Phipsi}
For any $q\in \C_+$ it holds that $\Phi(q)\in \C_+$ and $\psi(\Phi(q))=q$. Moreover, for $\theta\in\C_+$ with $\psi(\theta)\in \C_+$ it must be that $\Phi(\psi(\theta))=\theta$. 
In particular, $\Phi(q)$ is the unique solution of $\psi(\cdot)=q$ in $\C_+$ for any $q\in \C_+$.
\end{lemma}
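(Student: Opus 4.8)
The plan is to dispatch the three claims in order, using the L\'evy--Khintchine (Bernstein) representation of $\Phi$ on $\C_+$ recalled above, the identity theorem for analytic functions, and a single optional-stopping argument; note that $\psi$ is analytic on $\C_+$, since one may differentiate its defining integral under the integral sign with exactly the bounds that make the L\'evy--Khintchine formula converge for $\Re\theta\ge 0$. \emph{Step 1: $\Phi(\C_+)\subseteq\C_+$, and $\psi(\Phi(q))=q$ on $\C_+$.} Taking real parts in $\Phi(q)=\overline q+\overline d q-\int_0^\infty(e^{-qx}-1)\,\overline\nu(\D x)$ gives $\Re\Phi(q)=\overline q+\overline d\,\Re q+\int_0^\infty(1-\Re e^{-qx})\,\overline\nu(\D x)$, and for $\Re q>0$ every summand is $\ge 0$, with $1-\Re e^{-qx}\ge 1-e^{-\Re(q)x}>0$ for $x>0$. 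Were this sum zero we would be forced to $\overline q=\overline d=0$ and $\overline\nu\equiv 0$, i.e.\ $\Phi\equiv 0$, contradicting the classical fact that $\Phi(q_0)>0$ for real $q_0>0$ (the unique positive root of the convex function $\psi|_{[0,\infty)}$). Hence $\Re\Phi(q)>0$, so $q\mapsto\psi(\Phi(q))-q$ is well defined and analytic on the connected domain $\C_+$; it vanishes on $(0,\infty)$ by the quoted characterisation of $\Phi$ there, hence on all of $\C_+$ by the identity theorem.

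\emph{Step 2: if $\theta\in\C_+$ and $q:=\psi(\theta)\in\C_+$, then $\Phi(q)=\theta$.} First I would extend the defining identity: for every $q\in\C_+$ one has $\e[e^{-q\tau_x^+}\1{\tau_x^+<\infty}]=e^{-\Phi(q)x}$, because both sides are analytic on $\C_+$ --- the left as the Laplace transform of the (sub)probability law of $\tau_x^+$ on $[0,\infty)$, the right by the Bernstein representation --- and they agree on $(0,\infty)$ by~\eqref{eq:taux}. Now fix $\theta$ as above and consider the complex martingale $M_t=e^{\theta X_t-\psi(\theta)t}$ (genuinely a martingale, since $\e e^{\Re(\theta)X_t}<\infty$ for a spectrally-negative $X$). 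Absence of positive jumps gives $X_{t\wedge\tau_x^+}\le x$ for all $t$ and $X_{\tau_x^+}=x$ on $\{\tau_x^+<\infty\}$, so $|M_{t\wedge\tau_x^+}|\le e^{\Re(\theta)x}e^{-\Re(q)(t\wedge\tau_x^+)}\le e^{\Re(\theta)x}$; thus $(M_{t\wedge\tau_x^+})_{t\ge 0}$ is a bounded martingale and converges a.s.\ and in $L^1$ to $e^{\theta x-q\tau_x^+}\1{\tau_x^+<\infty}$ (on $\{\tau_x^+=\infty\}$ the limit is $0$ because $\Re q>0$). Equating $M_0=1$ with the expectation of the limit yields $\e[e^{-q\tau_x^+}\1{\tau_x^+<\infty}]=e^{-\theta x}$, hence $e^{-\Phi(q)x}=e^{-\theta x}$ for all $x\ge 0$, and therefore $\Phi(q)=\theta$. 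The ``in particular'' claim is then immediate: any $\theta\in\C_+$ solving $\psi(\theta)=q\in\C_+$ satisfies $\theta=\Phi(\psi(\theta))=\Phi(q)$.

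The genuinely delicate point is Step 2 --- ruling out a ``spurious'' root of $\psi(\cdot)=q$ lying in $\C_+$. A purely function-theoretic alternative would be: $V:=\Phi(\C_+)$ is open by the open mapping theorem, and from $\psi\circ\Phi=\mathrm{id}$ together with continuity of $\Phi$ and $\psi$ one checks that $V$ is relatively closed in $U:=\{\theta\in\C_+:\psi(\theta)\in\C_+\}$, so if $U$ were connected we would get $V=U$, which is exactly the assertion. However, proving connectedness of $U$ does not seem straightforward in general, so I would keep the optional-stopping argument; its only subtle points are justifying the $L^1$-convergence of $M_{t\wedge\tau_x^+}$ and identifying its limit on $\{\tau_x^+=\infty\}$.
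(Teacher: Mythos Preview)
Your proof is correct and follows essentially the same route as the paper: the core is the optional-stopping argument with the exponential martingale $e^{\theta X_t-\psi(\theta)t}$, bounding via $X_{t\wedge\tau_x^+}\le x$, passing to the limit by dominated convergence, and then reading off $\Phi(\psi(\theta))=\theta$ from $e^{-\Phi(\psi(\theta))x}=e^{-\theta x}$. The only difference is that you spell out Step~1 explicitly via the Bernstein representation and the identity theorem, whereas the paper simply declares the first claim ``well known'' from~\eqref{eq:taux}; your added care there (and your justification of the $L^1$ limit on $\{\tau_x^+=\infty\}$) is welcome but does not change the strategy.
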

This result can also be useful when directly considering the diagonalizable case in~\eqref{eq:fexp}.
We are now ready to prove our main result concerning the supremum and first passage time for an ME time-horizon.
\begin{thm}\label{thm:Phi}
Let $T\sim\mbox{ME}(\mat{\alpha},\mat{T},\mat t)$ be independent of $X$.
Then 
\[\p(\overline X_T>x)=\p(\tau_x^+<T)=\ba e^{-\Phi(-\mat{T})x}\mat{l},\qquad x\geq 0,\]
which amounts to
\[ \overline X_T\sim \mbox{ME}(\ba,-\Phi(-\mat T),\Phi(-\mat T)\mat l). \]
Moreover, $\Phi(-\mat{T})\in \R^{p\times p}$ given in~\eqref{eq:phi} solves the equation $\psi(\cdot)=-\mat{T}$, 
and it is the unique solution among $\C^{p\times p}$ matrices with eigenvalues in~$\C_+$. 
\end{thm}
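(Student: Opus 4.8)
The plan is to split the statement into five pieces and handle them in order: the pathwise identity of events, the application of the toolkit formula \eqref{eq:tool}, the identification of the relevant scalar function $g$ and the transfer to matrices, the matrix equation $\psi(\Phi(-\mat T))=-\mat T$, and finally uniqueness.

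First I would record that, since $X$ has no positive jumps, the level $x$ can only be reached continuously from below, so for every $t$ one has $\{\overline X_t>x\}=\{\tau_x^+<t\}$ exactly; taking $t=T$ (which has a density, so no boundary mass is lost) gives $\p(\overline X_T>x)=\p(\tau_x^+<T)$. Then I would apply \eqref{eq:tool} to the functional $\phi(X_{[0,t)})=\1{\tau_x^+<t}$, which is a measurable function of the killed path, so that the associated scalar is $g(q)=\p(\tau_x^+<e_q)$. By \eqref{eq:taux} we have $g(q)=e^{-\Phi(q)x}$ for real $q>0$; both $g$ (a Laplace transform, hence analytic on $\C_+$) and $q\mapsto e^{-\Phi(q)x}$ (analytic on $\C_+$ by the L\'evy--Khintchine representation \eqref{eq:phi} of $\Phi$) are analytic on $\C_+$ and agree on $(0,\infty)$, hence coincide there. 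By Lemma~\ref{lem:Phipsi} the map $\Phi$ sends $\C_+$ into $\C_+$, so by Lemma~\ref{lem:calculus} the eigenvalues of $\Phi(-\mat T)$ lie in $\C_+$ and the composition and matrix-exponential rules give $g(-\mat T)=e^{-\Phi(-\mat T)x}$; substituting into \eqref{eq:tool} yields $\p(\tau_x^+<T)=\ba e^{-\Phi(-\mat T)x}\mat l$. Reality of $\Phi(-\mat T)$ is read off \eqref{eq:phi}: $\overline q,\overline d$ and $\overline\nu$ are real and $e^{\mat T x}$ is real, so the expression is an integral of real matrices. Differentiating the tail in $x$, using that $\Phi(-\mat T)$ commutes with its own exponential, produces the density $\ba e^{-\Phi(-\mat T)x}\,\Phi(-\mat T)\mat l$, which is precisely the claimed ME representation.

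For the matrix equation I would apply $\psi$ to $\Phi(-\mat T)$: $\psi$ is analytic on $\C_+$, the eigenvalues of $\Phi(-\mat T)$ lie in $\C_+$, and $\psi\circ\Phi=\mathrm{id}$ on $\C_+$ by Lemma~\ref{lem:Phipsi}, so the composition rule of Lemma~\ref{lem:calculus} gives $\psi(\Phi(-\mat T))=(\psi\circ\Phi)(-\mat T)=-\mat T$. For uniqueness, let $\mat M\in\C^{p\times p}$ have all eigenvalues in $\C_+$ and satisfy $\psi(\mat M)=-\mat T$. Every eigenvalue $\mu$ of $\mat M$ then satisfies $\psi(\mu)\in\C_+$, because $\psi(\mu)$ is an eigenvalue of $\psi(\mat M)=-\mat T$; hence $\mu$ lies in the open set $U=\{\theta\in\C_+:\psi(\theta)\in\C_+\}$, on which $\Phi\circ\psi=\mathrm{id}$ by Lemma~\ref{lem:Phipsi}. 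Since the functional calculus of $\mat M$ depends only on the germs of the function at the eigenvalues of $\mat M$, applying $\Phi$ to both sides of $\psi(\mat M)=-\mat T$ gives $\mat M=(\Phi\circ\psi)(\mat M)=\Phi(\psi(\mat M))=\Phi(-\mat T)$.

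I expect the main obstacle to be the bookkeeping in the uniqueness step: one must verify that the eigenvalues of any competing solution $\mat M$ are automatically confined to the open region where $\Phi$ and $\psi$ are genuine inverses, so that the identity $(\Phi\circ\psi)(\mat M)=\mat M$ is legitimate at the level of matrix functions and not merely eigenvalue-by-eigenvalue. The analytic-continuation identification $g(q)=e^{-\Phi(q)x}$ on $\C_+$ and the analyticity of $\psi$ there each need a line, but they are routine.
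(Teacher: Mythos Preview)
Your proposal is correct and follows essentially the same route as the paper's proof: apply \eqref{eq:tool} with $g(q)=e^{-\Phi(q)x}$, differentiate for the ME density, use $\psi\circ\Phi=\mathrm{id}$ on $\C_+$ via Lemma~\ref{lem:Phipsi} and the composition rule for the matrix equation, and for uniqueness apply $\Phi$ to both sides and invoke $\Phi\circ\psi=\mathrm{id}$ near the eigenvalues of $\mat M$. Your version is simply more explicit---you spell out the analytic continuation of $g$, the reality of $\Phi(-\mat T)$ from \eqref{eq:phi}, and the verification that the eigenvalues of any competing $\mat M$ land in the region where $\Phi\circ\psi=\mathrm{id}$---but these are exactly the details the paper compresses into one or two lines and a reference to \cite[Thm.~1.14]{matrixFunctions}.
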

\begin{proof}
The equivalence between the probabilities is standard and follows form the fact that $X$ does not jump at~$T$ a.s.
The formula follows from~\eqref{eq:tool} and the representation $\p(\tau_x^+<e_q)$ of the transform, see~\eqref{eq:taux}.
Differentiate to obtain the \mbox{ME} density of $\overline X_T$ and thereby the stated ME representation.
%This readily yields the stated expression for $\p(\tau_x^+<T)=-\ba A\int_0^\infty e^{\mat{T} u}\p(\tau_x^+<u)\D u\mat{t}$.

From Lemma~\ref{lem:Phipsi} we see that the eigenvalues of $\Phi(-\mat{T})$ are all in $\C_+$ and according to Lemma~\ref{lem:calculus}
\[\psi\Big(\Phi(-\mat{T})\Big)=\psi\circ\Phi(-\mat{T})=-\mat{T}.\]
Finally, we prove uniqueness by considering a $p\times p$ matrix $\mat M$ with eigenvalues in $\C_+$ such that $\psi(\mat M)=-\mat T$.
Apply $\Phi$ to get $\Phi\circ\psi(\mat M)=\Phi(-\mat T)$, and so it is left to note that $\Phi(\psi(\theta))=\theta$ for some neighborhoods of the eigenvalues of $\mat M$, see also~\cite[Thm.\ 1.14]{matrixFunctions}.
\end{proof}

%\begin{cor}
%Let $T\sim\mbox{ME}(\mat{\alpha},\mat{T},\mat{t})$ be independent of $X$ and let $\bar{X}_s=\sup_{0\leq u\leq s}X_u$. Then
%\[ \p (\bar{X}_{T}>T) =  \ba e^{-\Phi(-\mat{T})x}(-\mat{T})^{-1}\mat{t}, \]
%i.e. $\bar{X}_T\sim \mbox{ME}(\ba , \mat{T},\mat{t})$. In particular, if $T\sim\mbox{PH}(\ba,\mat{T})$ then
%$\bar{X}_T\sim \mbox{PH}(\ba , \mat{T})$.
%\end{cor}
%\begin{proof}
%This follows from $\p (\bar{X}_{T}>x) = \p (\tau_x^+<T)$ and \cite[Thm. 4.1.18]{bladt_nielsen_book}.
%\end{proof}

%\jicomment{The following is also clear from a probabilistic argument. Maybe we will want to postpone all PH reductions to a single section at the end...}
The case $T\sim \mbox{PH}(\ba,\mat T)$ is well understood, as it can be analyzed in the framework of a Markov additive processes.
Consider the Markov chain~$(J_{\tau_x^{+}})_{x\geq 0}$ formed by tracking the original phase $J$ at first passage times, and let $\mat G$ be its subintensity matrix.
It is now immediate that the supremum $\overline X_T$ is $PH(\ba,\mat G)$. The following representation of $\mat G$ implied by Theorem~\ref{thm:Phi} seems to be new.
\begin{cor}\label{cor:G}
For $T\sim \mbox{PH}(\ba,\mat T)$ the intensity matrix of the Markov chain $(J_{\tau_x^+}),x\geq 0$ is given by $\mat G=-\Phi(-\mat T)$.
\end{cor}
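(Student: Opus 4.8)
The idea is to identify the transition semigroup of the chain $(J_{\tau_x^+})_{x\ge0}$ entrywise and to recognise it as $e^{-\Phi(-\mat T)x}$ by means of the functional calculus of Section~\ref{subsec:matrix-calc}. Write $\p_i$ for the law under which the phase process $J$ (the transient Markov chain with sub-intensity matrix $\mat T$ and killing vector $\mat t=-\mat T\mat 1$) is started in state $i$, and recall that $T$ is the life time of $J$ and that $X$ and $J$ are independent. By the very definition of $\mat G$ as the sub-intensity matrix of the killed chain $(J_{\tau_x^+})$, already introduced in the paragraph preceding the corollary, its sub-transition semigroup satisfies
\[(e^{\mat Gx})_{ij}=\p_i\big(\tau_x^+<T,\ J_{\tau_x^+}=j\big),\qquad i,j\in\{1,\dots,p\},\ x\ge0,\]
where we used that $\{J_{\tau_x^+}\in\{1,\dots,p\}\}=\{\tau_x^+<T\}$ and that absence of positive jumps makes $J_{\tau_x^+}$ well defined on that event.

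First I would condition on the path of $X$. On $\{\tau_x^+=\infty\}$ the event above is null, since $T<\infty$ a.s.; on $\{\tau_x^+<\infty\}$, given $\tau_x^+=t$, independence of $X$ and $J$ gives $\p_i(T>t,\,J_t=j)=(e^{\mat T t})_{ij}$. Taking expectations yields $e^{\mat Gx}=\e[e^{\mat T\tau_x^+}\1{\tau_x^+<\infty}]$. It then remains to evaluate this matrix. Let $\mu_x$ be the sub-probability law of $\tau_x^+$ on $[0,\infty)$; by~\eqref{eq:taux} we have $\int_{[0,\infty)}e^{-qt}\,\mu_x(\D t)=e^{-\Phi(q)x}$ for $q\in\C_+$, equivalently $\int_{[0,\infty)}e^{zt}\,\mu_x(\D t)=e^{-\Phi(-z)x}$ on the open left half-plane $\C_-=\{z\in\C:\Re(z)<0\}$. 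Since the eigenvalues of $\mat T$ lie in $\C_-$, the integral rule of Lemma~\ref{lem:calculus}, together with its composition and product rules and with Lemma~\ref{lem:Phipsi} (which places the eigenvalues of $\Phi(-\mat T)$ in $\C_+$), gives
\[\e\big[e^{\mat T\tau_x^+}\1{\tau_x^+<\infty}\big]=\int_{[0,\infty)}e^{\mat T t}\,\mu_x(\D t)=e^{-\Phi(-\mat T)x}.\]
Hence $e^{\mat Gx}=e^{-\Phi(-\mat T)x}$ for all $x\ge0$, and differentiating at $x=0$ gives $\mat G=-\Phi(-\mat T)$; as a byproduct $\psi(-\mat G)=-\mat T$ by Theorem~\ref{thm:Phi}.

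It is worth stressing why the argument must be run at the level of individual phases. The equality in distribution asserting that $\overline X_T$ is simultaneously $\mbox{PH}(\ba,\mat G)$ (the Markov additive viewpoint) and $\mbox{ME}(\ba,-\Phi(-\mat T),\Phi(-\mat T)\mat l)$ (Theorem~\ref{thm:Phi}) is \emph{not} enough to conclude $\mat G=-\Phi(-\mat T)$, since ME, and even PH, representations are highly non-unique; the phase-refined computation above is exactly what repairs this. An alternative way to finish, once $e^{\mat Gx}=\e[e^{\mat T\tau_x^+}\1{\tau_x^+<\infty}]$ is in hand, is to note that $\overline X_T<\infty$ a.s. forces $e^{\mat Gx}\to\mat 0$, so $-\mat G$ has all eigenvalues in $\C_+$, and then $\psi(-\mat G)=-\mat T$ identifies $-\mat G=\Phi(-\mat T)$ through the uniqueness part of Theorem~\ref{thm:Phi}. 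I expect the only real work to be bookkeeping: handling the defect of $\tau_x^+$ correctly (the contribution of $\{\tau_x^+=\infty\}$ vanishes on both sides), and checking that the integral rule of Lemma~\ref{lem:calculus} applies to the measure $\mu_x$ on the domain $\C_-$ and composes properly with $\Phi$.
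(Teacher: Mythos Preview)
Your proof is correct and takes a genuinely different route from the paper's. The paper's argument is a two-line appeal to the literature: it quotes the known characterisation of $\mat G$ from the theory of Markov additive processes, namely that $\mat G$ is the unique matrix with eigenvalues in $\C_+$ solving $\psi(-\mat G)=-\mat T$ (references~\cite{jordan,thesis}), and then invokes the uniqueness clause of Theorem~\ref{thm:Phi} to conclude $\mat G=-\Phi(-\mat T)$. You instead compute the semigroup $e^{\mat Gx}$ directly: by conditioning on the independent phase process you identify $(e^{\mat Gx})_{ij}=\e\big[(e^{\mat T\tau_x^+})_{ij};\tau_x^+<\infty\big]$, and then the integral and composition rules of Lemma~\ref{lem:calculus} applied to the law of~$\tau_x^+$ give $e^{\mat Gx}=e^{-\Phi(-\mat T)x}$. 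This is self-contained within the paper's toolkit---no external MAP result is needed---and it yields the full semigroup identity rather than just the generator equality; it also makes transparent, as you rightly stress, why comparing the two representations of the law of $\overline X_T$ alone would not suffice. The paper's route is shorter if one is willing to import the matrix-equation characterisation of $\mat G$; yours is the natural argument if one is not.
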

\begin{proof}
It is well known that $\mat G$ is the unique solution of $\psi(-\mat G)=-\mat T$ among $p\times p$ matrices with eigenvalues in $\C_+$, see e.g.~\cite[Thm.\ 2]{jordan} and also~\cite[Thm.\ 4.14]{thesis} allowing for phase-dependent killing rates and not imposing irreducibility requirements.
Now Theorem~\ref{thm:Phi} shows that $\mat G=-\Phi(-\mat T)$.
\end{proof}
The fact that $-\Phi(-\mat T)$ is a sub-intensity matrix of some Markov chain also follows from the general theory for Bernstein functions in \cite{schilling,berg_1993}.

%\begin{example}\rm 
%In the Brownian example~\eqref{eq:bm} the equation $\psi(\theta)=q$ has the solutions $\pm\sqrt{d^2+2q}-d$, and so
%\[\Phi(-\mat{T})=\sqrt{d^2\mat{I}-2\mat{T}}-d\mat{I}.\]
%\end{example}

\section{The scale function and two-sided exit}
The so-called scale function $W_q:[0,\infty)\mapsto [0,\infty)$ defined for all killing rates $q\geq 0$ plays a fundamental role in the fluctuation theory of one-sided L\'evy processes. 
It is the unique continuous, non-decreasing function identified by its transform
\begin{equation}\label{eq:transform}\int_0^\infty e^{-\theta x}W_q(x)\D x=1/(\psi(\theta)-q),\qquad \theta>\Phi(q),\end{equation}
see~\cite[Thm.\ VII.8]{bertoin_book}.
The scale function is strictly positive for $x>0$ and it solves the basic two-sided exit problem:
\begin{equation}\label{eq:exit}\e(e^{-q\tau_x^+};\tau_x^+<\tau_{-y}^-)=W_q(y)/W_q(x+y),\qquad x,y\geq 0,x+y>0.\end{equation}

It is well known that for any $x\geq 0$ the function $q\mapsto W_q(x)$ may be analytically continued to $q\in\mathbb C$ via the identity
\begin{equation}\label{eq:W_analytic}W_q(x)=\sum_{k\geq 0}q^kW_0^{*(k+1)}(x),\end{equation}
where $W_0^{*(k+1)}$ is the $(k+1)$-th convolution of $W_0$, %upper bounded by $x^kW_0(x)^{k+1}/k!$, 
see~\cite{bertoin_ergodicity, suprun} or~\cite[Sec.\ 3.3]{scale_review}.
Furthermore, the transform~\eqref{eq:transform} is still true for any $q\in\mathbb C$ and all $\theta>\Phi(|q|)$.
The following Lemma extends a basic property of the scale function to $q\in\C_+$, see Appendix for the proof. This result is not true for $q$ with $\Re(q)<0$ and, in fact, the zeros for negative $q$ have been employed in~\cite{bertoin_ergodicity}. 
\begin{lemma}\label{lem:non-zero}
It holds that $W_q(x)\neq 0$ for all $x>0$ and all $q\in \C_+$. 
\end{lemma}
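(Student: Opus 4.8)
The plan is to exploit the probabilistic representation of the scale function together with the non-zero property that is already available for positive real killing rates. The starting point is the identity~\eqref{eq:exit}: for $q>0$ real, $W_q(y)/W_q(x+y)=\e(e^{-q\tau_x^+};\tau_x^+<\tau_{-y}^-)$, which is manifestly a nonzero quantity, and together with $W_q(x)>0$ for $x>0$ this gives the claim for $q\in(0,\infty)$. To pass to general $q\in\C_+$ one natural route is to write, for fixed $x>0$, the ratio $W_q(x)/W_0(x)$ as a single analytic function of $q$ on $\C_+$ and to show it omits the value $0$; but a cleaner approach is to produce, for each $q\in\C_+$, a probabilistic formula in which $W_q(x)$ appears divided by something positive and equated to a quantity one can bound away from zero.

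Concretely, first I would record the analytic continuation~\eqref{eq:W_analytic} and the validity of the transform~\eqref{eq:transform} for all $q\in\C$ with $\theta>\Phi(|q|)$, as quoted in the excerpt. Next, for $q=a+\ii b$ with $a>0$, I would compare $W_q$ with $W_a$. The series~\eqref{eq:W_analytic} expresses $W_q(x)=\sum_{k\ge 0} q^k W_0^{*(k+1)}(x)$ and $W_a(x)=\sum_{k\ge 0} a^k W_0^{*(k+1)}(x)$ with all coefficients $W_0^{*(k+1)}(x)\ge 0$; hence
\[
|W_q(x)| \;\ge\; \Re\!\big(e^{-\ii\varphi}W_q(x)\big)
\]
for a suitable phase — but this alone is not enough because cancellation can occur. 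The key step is therefore different: I would use that $\{\tau^+_z : z\ge 0\}$ is a (possibly killed) subordinator and apply the first-passage identity to reduce to a statement about Laplace transforms of a nonnegative function. Indeed, from~\eqref{eq:transform}, $1/(\psi(\theta)-q)=\int_0^\infty e^{-\theta x}W_q(x)\,\D x$; if $W_q(x_0)=0$ for some $x_0>0$, I want to derive a contradiction with the positivity structure of $W_0$ via the convolution series.

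The cleanest argument, and the one I would carry out, is this. Suppose $W_q(x_0)=0$ for some $q=a+\ii b\in\C_+$ and $x_0>0$. Using~\eqref{eq:W_analytic}, split according to real and imaginary parts: writing $q^k=r^k e^{\ii k\vartheta}$ with $r=|q|>0$ and $\vartheta=\arg q\in(-\pi/2,\pi/2)$, we get $0=\sum_k r^k\cos(k\vartheta)\,W_0^{*(k+1)}(x_0)$ and $0=\sum_k r^k\sin(k\vartheta)\,W_0^{*(k+1)}(x_0)$. Since $W_0^{*(1)}=W_0$ and $W_0(x_0)>0$ (strict positivity of $W_0$ on $(0,\infty)$), and since for $k$ small enough $\cos(k\vartheta)>0$, this forces heavy cancellation from the higher-order terms; combined with the fact that $W_q$ as a function of $x$ has Laplace transform $1/(\psi(\theta)-q)$, which is analytic and nonzero for $\theta$ large, I would argue (e.g. via the real-part inequality $\Re\big((\psi(\theta)-q)\int_0^\infty e^{-\theta x}W_q(x)\,\D x\big)=1$ and letting $\theta\to\infty$ to control $W_q$ near $x_0$, or via Lemma~\ref{lem:Phipsi} to locate the singularities of the transform in $\theta$) that $W_q$ cannot vanish. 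The main obstacle I anticipate is exactly this cancellation issue: the naive sign argument from~\eqref{eq:W_analytic} fails once $|q|$ is large or $\arg q$ is close to $\pm\pi/2$, so the real work is to find the right positivity certificate — most likely by going back to the exit identity~\eqref{eq:exit} and analytically continuing $\e(e^{-q\tau^+_x};\tau^+_x<\tau^-_{-y})$ in $q$ to $\C_+$ (legitimate because $\tau^+_x\ge 0$, so $q\mapsto \e(e^{-q\tau^+_x};\tau^+_x<\tau^-_{-y})$ is analytic on $\C_+$ and bounded in modulus by $\p(\tau^+_x<\tau^-_{-y})\le 1$), and then using that this continued quantity equals $W_q(y)/W_q(x+y)$ wherever $W_q(x+y)\neq 0$. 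A vanishing point of $W_q$ would then appear as a pole of a bounded analytic function, which is impossible. I expect this contour/boundedness argument to be the crux, with the series expansion playing only a supporting role for identifying $W_q$ with the continued exit probability.
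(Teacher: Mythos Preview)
Your proposal correctly isolates the main tool: for fixed $x,y>0$, the map
\[
q\;\longmapsto\;\e\big(e^{-q\tau_x^+};\,\tau_x^+<\tau_{-y}^-\big)
\]
is analytic on $\C_+$ and bounded in modulus by~$1$, and it agrees with $W_q(y)/W_q(x+y)$ for real $q>0$, hence (by the identity principle) on all of $\C_+$ away from the zeros of $q\mapsto W_q(x+y)$. This is exactly the paper's first step.

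The gap is in your concluding sentence. A zero of $q\mapsto W_q(x_0)$ at some $q_0\in\C_+$ does \emph{not} produce a pole of the bounded function $W_q(y)/W_q(x_0)$: boundedness only forces the numerator $W_q(y)$ to vanish at $q_0$ to at least the same order as the denominator. So what your argument actually yields is that $W_{q_0}(x_0)=0$ implies $W_{q_0}(y)=0$ for every $y\in(0,x_0]$ --- a removable singularity, no contradiction. The paper reaches this same intermediate conclusion (phrased as $\e(e^{-q\tau_{x-y}^+};\tau_{x-y}^+<\tau_{-y}^-)\,W_q(x)=W_q(y)$) and then invokes a second, independent identity,
\[
(q-p)\int_0^{x_0}W_p(x_0-y)W_q(y)\,\D y\;=\;W_q(x_0)-W_p(x_0),\qquad p,q\in\C,
\]
which is verified by comparing Laplace transforms. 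With $q=q_0$ the left side vanishes, hence $W_p(x_0)=0$ for \emph{every} $p\in\C$, contradicting $W_p(x_0)>0$ for real $p>0$. Your outline is missing this (or an equivalent) device to pass from ``$W_{q_0}$ vanishes on an interval'' to a contradiction. One alternative, closer to a remark you discarded, is to note that $W_{q_0}\equiv 0$ on $(0,x_0]$ would force $\int_0^\infty e^{-\theta y}W_{q_0}(y)\,\D y$ to decay like $e^{-c\theta}$ as $\theta\to\infty$, whereas $1/(\psi(\theta)-q_0)$ decays only polynomially since $\psi(\theta)=O(\theta^2)$; but this needs to be argued and is not what your final paragraph claims.
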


According to~\eqref{eq:W_analytic} we define 
\begin{equation}\label{eq:WT}W_{-\mat T}(x)=\sum_{k\geq 0}(-\mat T)^kW_0^{*(k+1)}(x),\qquad x\geq 0\end{equation}
for any square matrix $-\mat T$. In order to claim invertibility of $W_{-\mat T}(x)$ we must assume that the eigenvalues of $-\mat T$ are in~$\C_+$. The above theory extends to the ME-time horizon in the following way.
\begin{thm}\label{thm:W}
For a square matrix $-\mat T$ with eigenvalues in $\C_+$ the matrix-valued function $W_{-\mat{T}}:[0,\infty)\mapsto \R^{p\times p}$ in~\eqref{eq:WT} is continuous, invertible for positive arguments, and is uniquely determined by the transform
\[\int_0^\infty e^{-\theta x}W_{-\mat{T}}(x)\D x=(\psi(\theta)\mat{I}+\mat{T})^{-1},\qquad \theta>\Phi(\rho_{\mat{T}}),\]
where $\rho_{\mat{T}}$ is the spectral radius of~$\mat{T}$. 

Moreover, for $T\sim \mbox{ME}(\ba,\mat T,\mat t)$ independent of $X$ there is the identity:
\[\p(\tau_x^+<\tau_{-y}^-\wedge T)=\ba W_{-\mat{T}}(y)W_{-\mat{T}}(x+y)^{-1}\mat{l},\qquad x,y\geq 0,x+y>0 .\]
\end{thm}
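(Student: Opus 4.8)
The plan is to view $W_{-\mat T}(x)$ as the image of the matrix $-\mat T$ under the scalar function $h_x(q):=W_q(x)$, which by~\eqref{eq:W_analytic} is \emph{entire} in $q$ with power series $\sum_{k\ge 0}W_0^{*(k+1)}(x)\,q^k$. Since the eigenvalues of $-\mat T$ lie in $\C\supset\C_+$, the power-series rule of Lemma~\ref{lem:calculus} identifies $h_x(-\mat T)=\sum_{k\ge 0}W_0^{*(k+1)}(x)(-\mat T)^k$, which is exactly~\eqref{eq:WT}. From this identification the three analytic assertions follow almost formally, and the exit identity follows from the toolkit~\eqref{eq:tool}.

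For continuity, the scalar series $\sum_k q^k W_0^{*(k+1)}(x)$ converges for every real $q$, so $\sum_k r^k W_0^{*(k+1)}(M)<\infty$ with $r:=\rho_{\mat T}+1$ and any $M>0$ (using that each $W_0^{*(k+1)}$ is non-decreasing to replace $x\le M$ by $M$); together with Gelfand's formula $\|(-\mat T)^k\|\le C r^k$ this gives uniform convergence of~\eqref{eq:WT} on compacts, hence continuity of $x\mapsto W_{-\mat T}(x)$. For the transform, Tonelli applied to $\sum_k\|(-\mat T)^k\|\,e^{-\theta x}W_0^{*(k+1)}(x)$ — whose integral equals $\sum_k\|(-\mat T)^k\|\,\psi(\theta)^{-(k+1)}$, finite as soon as $\theta>\Phi(\rho_{\mat T})$, i.e.\ $\psi(\theta)>\rho_{\mat T}$ — licenses integrating~\eqref{eq:WT} term by term; using $\int_0^\infty e^{-\theta x}W_0^{*(k+1)}(x)\,\D x=\psi(\theta)^{-(k+1)}$ this yields $\psi(\theta)^{-1}\sum_k(-\mat T/\psi(\theta))^k=\psi(\theta)^{-1}(\mat I+\mat T/\psi(\theta))^{-1}=(\psi(\theta)\mat I+\mat T)^{-1}$, the Neumann series converging precisely because $\rho_{\mat T}/\psi(\theta)<1$. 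Uniqueness is the entrywise uniqueness theorem for Laplace transforms of continuous functions. Invertibility for $x>0$ is where Lemma~\ref{lem:non-zero} enters: by the spectral-mapping part of Lemma~\ref{lem:calculus} the eigenvalues of $W_{-\mat T}(x)=h_x(-\mat T)$ are the numbers $h_x(\lambda)=W_\lambda(x)$ with $\lambda$ ranging over the eigenvalues of $-\mat T$, all in $\C_+$; by Lemma~\ref{lem:non-zero} each such $W_\lambda(x)\ne 0$, so $0$ is not an eigenvalue and $W_{-\mat T}(x)$ is invertible.

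For the exit identity, apply~\eqref{eq:tool} to $\phi(X_{[0,t)})=\1{\tau_x^+<\tau_{-y}^-\wedge t}$: the relevant scalar function is $g(q)=\p(\tau_x^+<\tau_{-y}^-\wedge e_q)=\e(e^{-q\tau_x^+};\tau_x^+<\tau_{-y}^-)=W_q(y)/W_q(x+y)$ by~\eqref{eq:exit}, and by Lemma~\ref{lem:non-zero} the denominator is nonzero on $\C_+$, so $g$ is analytic there and~\eqref{eq:tool} gives $\p(\tau_x^+<\tau_{-y}^-\wedge T)=\ba\, g(-\mat T)\,\mat l$. It remains to evaluate $g(-\mat T)$, and here is the one genuinely delicate point: writing $g=h_y\cdot(1/h_{x+y})$, the factor $h_{x+y}$ is entire but $1/h_{x+y}$ is analytic \emph{only} on the open set where $h_{x+y}\ne 0$ — which, however, contains $\C_+$ and hence the spectrum of $-\mat T$ — so the product rule of Lemma~\ref{lem:calculus} still applies and gives $g(-\mat T)=h_y(-\mat T)\,(1/h_{x+y})(-\mat T)$. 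Since $h_{x+y}\cdot(1/h_{x+y})\equiv 1$ on $\C_+$ we get $h_{x+y}(-\mat T)\,(1/h_{x+y})(-\mat T)=\mat I$, and as $h_{x+y}(-\mat T)=W_{-\mat T}(x+y)$ is invertible by the previous step, $(1/h_{x+y})(-\mat T)=W_{-\mat T}(x+y)^{-1}$; likewise $h_y(-\mat T)=W_{-\mat T}(y)$ by the power-series rule, and these two matrices commute. Hence $\p(\tau_x^+<\tau_{-y}^-\wedge T)=\ba\,W_{-\mat T}(y)\,W_{-\mat T}(x+y)^{-1}\,\mat l$. I expect the main obstacle to be exactly this last bit of functional-calculus bookkeeping — justifying that one may pass to the quotient even though $1/W_\cdot(x+y)$ is not entire, and tying $(1/h_{x+y})(-\mat T)$ to the genuine matrix inverse — together with the (otherwise routine) Fubini and uniform-convergence estimates for the transform; everything else is a faithful transcription of the scalar theory.
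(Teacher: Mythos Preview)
Your proof is correct and follows essentially the same approach as the paper's: continuity via the power-series representation with uniform bounds, the transform via Fubini/Tonelli and a Neumann series summing to $(\psi(\theta)\mat I+\mat T)^{-1}$, uniqueness via entrywise Laplace uniqueness for continuous functions, invertibility via spectral mapping combined with Lemma~\ref{lem:non-zero}, and the exit identity by rewriting~\eqref{eq:exit} as $\p(\tau_x^+<\tau_{-y}^-\wedge e_q)$ and invoking~\eqref{eq:tool} together with the product rule of Lemma~\ref{lem:calculus}. You are simply more explicit than the paper on the functional-calculus bookkeeping for the quotient $W_q(y)/W_q(x+y)$ --- in particular the identification $(1/h_{x+y})(-\mat T)=W_{-\mat T}(x+y)^{-1}$ --- which the paper subsumes under the phrase ``using the matrix calculus from Lemma~\ref{lem:calculus}''.
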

\begin{proof} %[Proof of Theorem~\ref{thm:W}]
Continuity in~$x$ readily follows from the power series representation of $W_{-\mat T}(x)$ and the dominated convergence theorem.
It also yields the transform expression via Fubini's theorem (cf.\ \cite[Sec.\ 3.3]{scale_review})
\begin{multline*}\sum_{k\geq 0}(-\mat{T})^k\int_0^\infty e^{-\theta x}W_0^{*(k+1)}(x)\D x=\sum_{k\geq 0}(-\mat{T})^k/\psi(\theta)^{k+1}\\
=(\mat{I} +\mat{T}/\psi(\theta))^{-1}/\psi(\theta)
=(\psi(\theta)\mat I+\mat{T})^{-1},\end{multline*}
when $\psi(\theta),\theta>0$ exceeds $\rho_{\mat{T}}$ which is equivalent to $\theta>\Phi(\rho_{\mat{T}})$.
Uniqueness is established by viewing this transform as a matrix of transforms of continuous (not necessarily positive) functions~\cite[Ch.\ II]{widder}.

Invertibility of $W_{-\mat{T}}(x),\ x>0$ is an immediate consequence of Lemma~\ref{lem:non-zero} and Lemma~\ref{lem:calculus}.
Finally, we rewrite~\eqref{eq:exit} as $\p(\tau_x^+<\tau_{-y}^-\wedge e_q)$ and apply~\eqref{eq:tool} using the matrix calculus from Lemma~\ref{lem:calculus}.
%\[q\int_0^\infty e^{-qt} \p(\tau_x^+<\tau_y^-\wedge t)\D tW_q(x+y)=W_q(y),\]
%where all the factors are analytic in $q\in\C_+$. Applying this function to $-\mat{T}$ and using the calculus in  we get the identity with $q$ replaced by $-\mat{T}$.
%Finally,
%\begin{eqnarray*}
%\p(\tau_x^+<\tau_y^-\wedge T)&=&\int_0^\infty \mat{\alpha}e^{\mat{T}u}\mat{t} \p(\tau_x^+<\tau_y^-\wedge u)\D u\\
%&=&\mat{\alpha}(-\mat{T})\int_0^\infty e^{\mat{T}u}\p(\tau_x^+<\tau_y^-\wedge u)\D u (-\mat{T})^{-1}\mat{t}\\
%&=&\mat{\alpha}W_{-\mat{T}}(x)W_{-\mat{T}}(x+y)^{-1}\mat{1} .
%%&=&-\ba \mat{T}\int_0^\infty e^{\mat{T}t} \p(\tau_x^+<\tau_y^-\wedge t)\D t\bb,
%\end{eqnarray*}
\end{proof}

The matrices $W_{-\mat{T}}(x)$ and $W_{-\mat{T}}(y)$ commute for any $x,y\geq 0$ and so the order of matrix multiplication $W_{-\mat{T}}(y)W_{-\mat{T}}(x+y)^{-1}$ is arbitrary.
Unlike the classical case, the matrix $W_{-\mat T}(x)$ does not need to have positive entries, see Section~\ref{sec:numerics}
\begin{rem}\label{rem:addTr}\rm
Theorem~\ref{thm:W} readily implies an expression for the transform $\e(e^{-\delta\tau_x^+};\tau_x^+<\tau_{-y}^-\wedge T),\delta\geq 0$ and it is only required to take the ME distribution of $T\wedge e_\delta$ for an independent exponential random variable $e_\delta$.
\end{rem}
Finally, for $T\sim\mbox{PH}(\ba,\mat T)$ the matrix valued function $W_{-\mat T}(x)$ coincides with the matrix valued scale function of the Markov additive process $(X_t,J_t)$, 
where $J$ is an independent Markov chain defining the PH distribution, see~\cite{iva_palm}. This can be easily seen by comparing the transforms.

%The scale function $W_q(x)$ in our two running examples can be found in, e.g.,~\cite{bertoin_stable,hubalek}.
%\begin{example}\rm ~\cite{hubalek}
%\[W_q(x)=\frac{1}{\sqrt{2q+d}}\Big(e^{(\sqrt{2q+d}-d)x}-e^{-(\sqrt{2q+d}+d)x} \Big),\qquad q>0,x\geq 0,\]
%and so according to Lemma~\ref{lem:calculus} we have
%\[W_{-\mat{T}}(x)=(d\mat{I}-2\mat{T})^{-1/2}\Big(e^{(\sqrt{d\mat{I}-2\mat{T}}-d\mat{I})x}-e^{-(\sqrt{d\mat{I}-2\mat{T}}+d\mat{I})x} \Big),\qquad x\geq 0.\]
%\end{example}

\section{The  second scale function and further identities}\label{sec:Z}
The Skorokhod reflection of $X$ at $0$ starting from an arbitrary $x \geq  0$ is defined by
\[Y_t = X_t + R_t,\qquad R_t = -(0 \wedge \underline X_t),\]
where the non-decreasing process $R$ is often called a regulator. 
%We use $\p_x$ to signify that $X_0=x$.
The first passage time of the reflected processes is denoted by 
\[\eta_a=\inf\{t\geq 0: Y_t>a\}.\]
Then the second scale function $Z_q(\theta,x)$, and three (closely related) basic identities are given by
\begin{align}Z_q(\theta,x) &= e^{\theta x}\Big(1-(\psi(\theta)-q)\int_0^xe^{-\theta y}W_q(y) \D y\Big)\neq 0,\nonumber\\
\e_x\Big(e^{-q\eta_a-\theta R_{\eta_a}}\Big)&=Z_q(\theta,x)/Z_q(\theta,a)\label{eq:Zratio},\\
 \e_x\Big(e^{-q\tau_0^-+\theta X_{\tau_0^-}};\tau_0^-<\tau_a^+\Big)&=Z_q(\theta,x)-W_q(x)Z_q(\theta,a)/W_q(a),\nonumber\\
\e_x\Big(e^{-q\tau_0^-+\theta X_{\tau_0^-}};\tau_0^-<\infty\Big)&=Z_q(\theta,x)-W_q(x)\frac{\psi(\theta)-q}{\theta-\Phi(q)},\nonumber
\end{align}
for  all $q,\theta,a\geq 0$ and $x\in[0,a]$, see~\cite[Thm.\ 2]{iva_palm}, \cite{avram_review}, \cite[Eq.\ (58)]{scale_review}.
The last identity for $\theta=\Phi(q)$ is interpreted in the limiting sense.

\begin{lemma}\label{lem:Z}
For any $x\geq 0$ and $\theta\in \C,\Re(\theta)\geq 0$ the function $q\mapsto Z_q(\theta,x)$ is analytic. It is non-zero for $q\in \C_+$.
\end{lemma}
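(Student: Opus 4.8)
The plan is to establish the two assertions separately. For analyticity of $q\mapsto Z_q(\theta,x)$, I would start from the defining formula
\[Z_q(\theta,x) = e^{\theta x}\Big(1-(\psi(\theta)-q)\int_0^xe^{-\theta y}W_q(y)\,\D y\Big),\]
and use the already-established fact (see~\eqref{eq:W_analytic}) that $q\mapsto W_q(y)$ extends to an entire function via the series $\sum_{k\ge0}q^kW_0^{*(k+1)}(y)$. Since $y\mapsto W_0^{*(k+1)}(y)$ is bounded on $[0,x]$ with a growth bound in $k$ that makes the series converge uniformly for $q$ in compact sets and $y\in[0,x]$, I can interchange the integral over $y\in[0,x]$ with the sum over $k$ and with differentiation in $q$; this exhibits $\int_0^x e^{-\theta y}W_q(y)\,\D y$ as an entire function of $q$ (for each fixed $x$ and each fixed $\theta$ with $\Re\theta\ge0$, the integrand being absolutely integrable on the bounded interval). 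Multiplying by the affine factor $(\psi(\theta)-q)$ and by the constant $e^{\theta x}$ preserves analyticity, so $q\mapsto Z_q(\theta,x)$ is analytic (indeed entire) on $\C$.

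For the non-vanishing claim on $\C_+$, the natural route is via the probabilistic identity~\eqref{eq:Zratio}, namely $\e_x(e^{-q\eta_a-\theta R_{\eta_a}})=Z_q(\theta,x)/Z_q(\theta,a)$, valid for real $q,\theta,a\ge0$ and $x\in[0,a]$. Fix $\theta$ with $\Re\theta\ge0$ and $q\in\C_+$. I would pick $a=x$ first: then $\eta_a$ has $\p_x$-a.s. $\eta_x=0$ and $R_{\eta_x}=0$ (the reflected process starts at $x$ and is immediately above level $x$ in the first-passage sense only in a degenerate way) — more carefully, one wants to choose parameters so that the ratio identity forces $Z_q(\theta,x)\ne 0$. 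The cleanest argument: the right-hand side expectation $\e_x(e^{-q\eta_a-\theta R_{\eta_a}})$ is an analytic function of $q$ on $\C_+$ (dominated convergence, since $|e^{-q\eta_a-\theta R_{\eta_a}}|\le 1$ when $\Re q\ge0,\Re\theta\ge0$ and $\eta_a,R_{\eta_a}\ge0$), and equals $Z_q(\theta,x)/Z_q(\theta,a)$ for real $q\ge0$; by analytic continuation the identity $Z_q(\theta,x)=Z_q(\theta,a)\,\e_x(e^{-q\eta_a-\theta R_{\eta_a}})$ holds for all $q\in\C_+$. Taking $x=a$ gives $Z_q(\theta,a)=Z_q(\theta,a)\,\e_a(e^{-q\eta_a-\theta R_{\eta_a}})$, which is vacuous; so instead I take $x=0$, where $Z_q(\theta,0)=1\ne0$, and use the identity in the form $1 = Z_q(\theta,a)\,\e_0(e^{-q\eta_a-\theta R_{\eta_a}})$, wait — this requires $x\le a$ with $x=0$, giving $Z_q(\theta,0)/Z_q(\theta,a) = \e_0(e^{-q\eta_a-\theta R_{\eta_a}})$, hence $Z_q(\theta,a) = 1/\e_0(e^{-q\eta_a-\theta R_{\eta_a}})$ provided the expectation is non-zero. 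Thus it suffices to show $\e_0(e^{-q\eta_a-\theta R_{\eta_a}})\neq 0$ for $q\in\C_+$, $\Re\theta\ge0$.

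The main obstacle is precisely this last point: showing that the complex-valued expectation $\e_0(e^{-q\eta_a-\theta R_{\eta_a}})$ does not vanish. I expect to handle it by an argument parallel to the proof of Lemma~\ref{lem:non-zero} (non-vanishing of $W_q$) cited in the appendix: write $q = \xi + \ii\omega$ with $\xi>0$ and bound $|\e_0(e^{-q\eta_a-\theta R_{\eta_a}})|$ from below, or more robustly, use that on the real axis $Z_q(\theta,a)$ is real, positive and non-decreasing in $q$ (being $\ge 1$ since the integral term is subtracted but the identity shows it is a reciprocal of a probability $\le 1$), combined with a Phragmén–Lindelöf / monotone-domination estimate: for $q=\xi+\ii\omega$, $|\e_0(e^{-q\eta_a-\theta R_{\eta_a}})| $ compared with $\e_0(e^{-\xi\eta_a})>0$ via the elementary inequality controlling oscillatory integrals, or alternatively noting that $1/Z_q(\theta,a)$, being analytic and bounded by $1$ in modulus on $\C_+$ and positive on $(0,\infty)$, cannot have a zero inside $\C_+$ without contradicting the boundary behaviour. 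I would most likely mirror whatever technique the appendix uses for $W_q$, since $Z_q(\theta,\cdot)$ is built from $W_q$ by the same analytic operations and the two non-vanishing statements are structurally identical. Once $Z_q(\theta,a)\neq0$ is known for one value $a>x=0$ and all $q\in\C_+$, the ratio identity propagates it to all $a\ge0$, completing the proof.
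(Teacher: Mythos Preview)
Your analyticity argument is essentially the paper's: expand $W_q(y)$ as the series $\sum_{k\ge0}q^kW_0^{*(k+1)}(y)$ and integrate term by term over $[0,x]$, giving an entire function of~$q$ which is then multiplied by the affine factor $(\psi(\theta)-q)$.

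For the non-vanishing part you take a different and in fact shorter route than the paper, but you stop one trivial step short of finishing it. You correctly analytically continue the product form of~\eqref{eq:Zratio},
\[
Z_q(\theta,x)=Z_q(\theta,a)\,\e_x\!\big(e^{-q\eta_a-\theta R_{\eta_a}}\big),\qquad q\in\C_+,
\]
and correctly evaluate $Z_q(\theta,0)=1$. But then the identity at $x=0$ reads
\[
1=Z_q(\theta,a)\cdot\e_0\!\big(e^{-q\eta_a-\theta R_{\eta_a}}\big),
\]
and a product equal to $1$ forces \emph{both} factors to be nonzero. There is no ``main obstacle'': you do not need to prove the expectation is nonzero in order to conclude $Z_q(\theta,a)\neq0$; it is immediate from the displayed equation. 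Your subsequent discussion of Phragm\'en--Lindel\"of estimates and monotone domination is unnecessary.

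The paper's proof, by contrast, follows the template of Lemma~\ref{lem:non-zero} more literally: it invokes~\eqref{eq:Zratio} to propagate a hypothetical zero $Z_q(\theta,a)=0$ down to all $x\in[0,a]$, and then uses the convolution identity
\[
(p-q)\int_0^a W_p(a-x)Z_q(x,\theta)\,\D x=Z_p(a,\theta)-Z_q(a,\theta)
\]
to deduce $Z_p(a,\theta)=0$ for every $p$, contradicting positivity at real $p>0$. That extra convolution step is genuinely needed for $W_q$ because $W_q(0)$ may vanish, but for $Z_q$ it is redundant since $Z_q(\theta,0)=1$. So your approach, once you notice the last step, is the more economical one.
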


This result and our usual arguments based on~\eqref{eq:tool} lead to the following extension of the above formulas.
\begin{thm}
For a square matrix $-\mat T$ with eigenvalues in $\C_+$ the matrix function
\[Z_{-\mat{T}}(\theta,x) = e^{\theta x}\Big(\mat{I}-(\psi(\theta)\mat{I}+\mat{T})\int_0^xe^{-\theta y}W_{-\mat{T}}(y) \D y\Big),\qquad x\geq 0,\Re(\theta)\geq 0\]
is continuous and invertible. Moreover, for $T\sim \mbox{ME}(\mat{\alpha},\mat{T},\mat t)$ independent of $X$, there are the identities
\begin{align*}\e_x\Big(e^{-\theta R_{\eta_a}};\eta_a<T\Big)&=\ba Z_{-\mat{T}}(\theta,x)Z_{-\mat{T}}(\theta,a)^{-1}\mat{l},\\
\e_x\Big(e^{\theta X_{\tau_0^-}};\tau_0^-<\tau_a^+\wedge T\Big)&=\ba \Big(Z_{-\mat{T}}(\theta,x)-W_{-\mat{T}}(x)Z_{-\mat{T}}(\theta,a)W_{-\mat{T}}(a)^{-1}\Big)\mat{l},\\
\e_x\Big(e^{\theta X_{\tau_0^-}};\tau_0^-<T\Big)&=\ba \Big(Z_{-\mat{T}}(\theta,x)-W_{-\mat{T}}(x)(\psi(\theta)\mat{I}+\mat{T})(\theta\mat{I}-\Phi(-\mat{T}))^{-1}\Big)\mat{l}\end{align*}
for all $\theta,a\geq 0$ and $x\in[0,a]$, and in the last identity $\theta$ is such that the inverse is well-defined. 
\end{thm}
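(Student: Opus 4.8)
The plan is to follow the same template used in the proofs of Theorem~\ref{thm:Phi} and Theorem~\ref{thm:W}: transfer the scalar identities for an exponential horizon to the matrix horizon via the toolkit identity~\eqref{eq:tool}, after establishing the requisite analyticity and non-degeneracy. First I would record that, for each fixed $x\ge 0$ and $\theta$ with $\Re(\theta)\ge 0$, the map $q\mapsto Z_q(\theta,x)$ is analytic on $\C_+$ and non-zero there (Lemma~\ref{lem:Z}), and that $q\mapsto W_q(x)$ is entire (via~\eqref{eq:W_analytic}); consequently the three right-hand sides of the scalar identities are analytic functions of $q\in\C_+$. I would also note that $Z_q(\theta,x)$ admits the series representation $Z_q(\theta,x)=\sum_{k\ge 0}q^k\int_0^x\cdots$ obtained by inserting~\eqref{eq:W_analytic}, which, evaluated at the matrix $-\mat T$ through the last bullet of Lemma~\ref{lem:calculus} (power series) and Fubini, yields exactly the stated closed form $Z_{-\mat T}(\theta,x)=e^{\theta x}\bigl(\mat I-(\psi(\theta)\mat I+\mat T)\int_0^x e^{-\theta y}W_{-\mat T}(y)\,\D y\bigr)$; continuity in $x$ is immediate from the series and dominated convergence, as in the proof of Theorem~\ref{thm:W}.

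Next I would establish invertibility of $Z_{-\mat T}(\theta,x)$. Since $q\mapsto Z_q(\theta,x)$ is analytic and non-vanishing on $\C_+$, and the eigenvalues of $-\mat T$ lie in $\C_+$, the spectral mapping property in Lemma~\ref{lem:calculus} shows that the eigenvalues of $Z_{-\mat T}(\theta,x)$ are the values $Z_{\lambda_k}(\theta,x)$ with $\lambda_k$ the eigenvalues of $-\mat T$, all non-zero; hence $Z_{-\mat T}(\theta,x)$ is invertible. This is the analogue of how Lemma~\ref{lem:non-zero} gave invertibility of $W_{-\mat T}(x)$.

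Then I would derive the three identities. For each, start from the corresponding scalar identity with killing rate $q=e_\delta$, i.e.\ from~\eqref{eq:Zratio} and the two displays below it with $q$ in place of the exponential rate, rewrite the left-hand side as $\e_x\phi(X_{[0,e_q)})$ for the appropriate functional $\phi$ (for the first, $\phi=e^{-\theta R_{\eta_a}}\1{\eta_a<\cdot}$; for the second, $\phi=e^{\theta X_{\tau_0^-}}\1{\tau_0^-<\tau_a^+\wedge\,\cdot}$; for the third, $\phi=e^{\theta X_{\tau_0^-}}\1{\tau_0^-<\,\cdot}$), observe that $g(q)$ equals the analytic right-hand side, and apply~\eqref{eq:tool} to $-\mat T$. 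The matrix calculus of Lemma~\ref{lem:calculus} then turns $Z_q(\theta,x)/Z_q(\theta,a)$ into $Z_{-\mat T}(\theta,x)Z_{-\mat T}(\theta,a)^{-1}$ (the factors commute, being functions of the single matrix $-\mat T$, so the order is immaterial), $W_q(x)Z_q(\theta,a)/W_q(a)$ into $W_{-\mat T}(x)Z_{-\mat T}(\theta,a)W_{-\mat T}(a)^{-1}$, and $W_q(x)(\psi(\theta)-q)/(\theta-\Phi(q))$ into $W_{-\mat T}(x)(\psi(\theta)\mat I+\mat T)(\theta\mat I-\Phi(-\mat T))^{-1}$, using~\eqref{eq:phi} for $\Phi(-\mat T)$ and the composition rule for $\psi(\theta)\mat I-(-\mat T)=\psi(\theta)\mat I+\mat T$; finally multiply by $\ba$ on the left and $\mat l$ on the right.

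The main obstacle I anticipate is purely technical: justifying that $g(q)$—the expectation on the left of each scalar identity—genuinely coincides, as an analytic function on all of $\C_+$, with the closed-form right-hand side, since the original identities are stated only for real $q\ge 0$; this requires checking that both sides are analytic on $\C_+$ (the left by the Laplace-transform-in-time representation~\eqref{eq:tr0}, already noted to be analytic on $\C_+$; the right by Lemmas~\ref{lem:Z} and the entirety of $q\mapsto W_q(x)$) and invoking the identity theorem along the positive real axis. A secondary subtlety is the last identity's domain restriction on $\theta$: one must ensure $\theta\mat I-\Phi(-\mat T)$ is invertible, equivalently that $\theta$ is not an eigenvalue of $\Phi(-\mat T)$, and interpret the boundary case $\theta=\Phi(q)$ in the scalar identity in the limiting sense before passing to matrices—but since the theorem statement already imposes that the inverse be well-defined, this is handled by hypothesis. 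Everything else is a routine transfer via Lemma~\ref{lem:calculus}, exactly parallel to the proofs of Theorems~\ref{thm:Phi} and~\ref{thm:W}.
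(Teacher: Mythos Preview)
Your proposal is correct and follows essentially the same approach as the paper: the paper does not spell out a proof but simply remarks that Lemma~\ref{lem:Z} together with ``our usual arguments based on~\eqref{eq:tool}'' yield the result, which is precisely the analyticity/non-vanishing plus toolkit transfer via Lemma~\ref{lem:calculus} that you describe. Your write-up is in fact more detailed than the paper's, and the technical points you flag (analytic continuation via the identity theorem, the restriction on $\theta$ in the last identity) are exactly the ones implicit in the paper's treatment.
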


Furthermore, we may also obtain the distribution of $X$ with reflecting/terminating barriers at an independent ME time.
For example, $\p(X_T\in \D x,T<\tau_{-a}^-\wedge\tau_b^+)$ has a density
\[u_{|-a,b|}(x)=\ba \Big(W_{-\mat{T}}(a)W_{-\mat{T}}(a + b)^{-1}W_{-\mat{T}}(b - x) - W_{-\mat{T}}(-x)\Big)\mat{l},\]
where $W_{-\mat{T}}(\cdot)$ is a zero matrix for negative arguments, see, e.g.,~\cite[Thm.\ 1]{bertoin_ergodicity} for the case of exponential killing.

\section{The Wiener-Hopf factorization}\label{sec:WH}
\subsection{The general case}
The Wiener-Hopf factorization~\cite[Thm.\ VI.5]{bertoin_book} holds for a general L\'evy process $X$, not necessarily one-sided.
It states that $\overline X_{e_q}$ is independent of $\overline X_{e_q}-X_{e_q}$ for an exponential random variable $e_q$ of rate $q>0$ independent of~$X$.
Furthermore, by time reversal the two quantities are equal in law to $X_{e_q}-\underline X_{e_q}$ and $-\underline X_{e_q}$, respectively.
For an ME-horizon we have a vector factorization:
\begin{prop}\label{prop:WH_gen}
Let $T\sim\mbox{ME}(\ba,\mat T,\mat t)$ be independent of a general L\'evy process $X$.
Then for Borel sets $A,B\subset[0,\infty)$ we have
\begin{align*}&\p(\overline X_T\in A,\overline X_T-X_T\in B)=\p(X_T-\underline X_T\in A,-\underline X_T\in B)\\
&\quad=\ba\int_0^\infty  e^{\mat Tt}\p(\overline X_t\in A)\D t\cdot \int_0^\infty e^{\mat Tt} \p(-\underline X_t\in B)\D t(-\mat T)\mat t,
\end{align*}
where both terms are finite vector-valued signed measures.
\end{prop}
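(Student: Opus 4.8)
The plan is to reduce the vector identity to the scalar Wiener--Hopf factorization by the same analytic-continuation device used throughout the paper, applied to the functional $\phi(X_{[0,t)}) = \1{\overline X_t \in A}\,\1{\overline X_t - X_t \in B}$ (note that $\overline X_t$ and $X_t$ are both determined by $X_{[0,t)}$ in the sense needed, with the usual care at the right endpoint). First I would recall the classical identity for exponential $e_q$: writing $G_q(A) = q\int_0^\infty e^{-qt}\p(\overline X_t \in A)\,\D t$ and $H_q(B) = q\int_0^\infty e^{-qt}\p(-\underline X_t \in B)\,\D t$ for the (sub-probability) laws of $\overline X_{e_q}$ and $-\underline X_{e_q}$, the Wiener--Hopf factorization gives $\p(\overline X_{e_q}\in A,\ \overline X_{e_q}-X_{e_q}\in B) = G_q(A)H_q(B)/q$, using that $\overline X_{e_q}-X_{e_q} \eqd -\underline X_{e_q}$ and independence. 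Dividing by $q$ is exactly the normalization that makes $g(q) := G_q(A)H_q(B)/q$ the function to which formula~\eqref{eq:tool} applies: it is a ratio whose numerator is a product of two genuine Laplace transforms (each analytic and bounded by a constant on $\C_+$) and whose single factor $q$ in the denominator cancels, leaving $g(q)$ analytic on $\C_+$.

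Next I would invoke~\eqref{eq:tool}: with $T\sim\mathrm{ME}(\ba,\mat T,\mat t)$,
\[
\p(\overline X_T\in A,\ \overline X_T-X_T\in B) = \ba\, g(-\mat T)\,(-\mat T)^{-1}\mat t.
\]
It remains to identify $g(-\mat T)(-\mat T)^{-1}$ with the stated product of matrix integrals times $(-\mat T)$. Write $\tilde g(q) = G_q(A)/q = \int_0^\infty e^{-qt}\p(\overline X_t\in A)\,\D t$ and likewise $\tilde h(q) = H_q(B)/q$; these are honest Laplace transforms, so by the last bullet of Lemma~\ref{lem:calculus}, $\tilde g(-\mat T) = \int_0^\infty e^{\mat T t}\p(\overline X_t\in A)\,\D t$ and similarly for $\tilde h(-\mat T)$. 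Since $g(q) = q\,\tilde g(q)\tilde h(q)$, the product rule in Lemma~\ref{lem:calculus} gives $g(-\mat T) = (-\mat T)\,\tilde g(-\mat T)\,\tilde h(-\mat T)$ (the scalar factor $q$ becomes the matrix $-\mat T$, and all three factors commute as functions of $-\mat T$). Hence
\[
g(-\mat T)(-\mat T)^{-1}\mat t = (-\mat T)\tilde g(-\mat T)\tilde h(-\mat T)(-\mat T)^{-1}\mat t = \tilde g(-\mat T)\,\tilde h(-\mat T)\,(-\mat T)\mat t,
\]
using commutativity to move $(-\mat T)$ to the right past $\tilde g(-\mat T)\tilde h(-\mat T)$; this is precisely the asserted formula. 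The time-reversal identity equating the $(\overline X_T, \overline X_T - X_T)$ law with the $(X_T - \underline X_T, -\underline X_T)$ law is inherited verbatim from the scalar case, since it holds for every fixed horizon $t$ (duality $X_{[0,t]} \eqd (X_t - X_{(t-s)-})_{s\in[0,t]}$) and then integrates against $e^{\mat T t}\,\D t$; alternatively it follows by applying the same argument to the functional $\1{X_t - \underline X_t \in A}\,\1{-\underline X_t \in B}$ and matching transforms.

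The main obstacle is the justification of the matrix-valued Fubini/dominated-convergence steps and, relatedly, making sure the two building-block transforms $\tilde g(-\mat T)$ and $\tilde h(-\mat T)$ are well-defined and finite as vector-valued signed measures in $(A,B)$: one must check that for each fixed Borel $A$ the scalar map $t\mapsto \p(\overline X_t\in A)$ is bounded and measurable so that $\int_0^\infty \|e^{\mat T t}\|\,\D t < \infty$ controls the integral, and that the resulting set functions $A\mapsto \ba\,\tilde g(-\mat T)\,\cdot$ are countably additive signed measures (this is where "finite vector-valued signed measures" must be argued, essentially by bounding total variation by $\int_0^\infty \|e^{\mat T t}\|\,\D t$, finite because the eigenvalues of $\mat T$ lie in the open left half-plane). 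Everything else is the now-standard pattern: rewrite the target as $\e\phi(X_{[0,e_q)})$, continue analytically to $\C_+$, and apply Lemma~\ref{lem:calculus}.
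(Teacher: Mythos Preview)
Your approach is essentially identical to the paper's: take the scalar Wiener--Hopf identity for an exponential horizon, continue it analytically to $\C_+$, and apply the resulting function to $-\mat T$ via Lemma~\ref{lem:calculus}. However, the execution contains two slips that happen to cancel. First, the Wiener--Hopf factorization gives
\[
\p(\overline X_{e_q}\in A,\ \overline X_{e_q}-X_{e_q}\in B)=G_q(A)\,H_q(B),
\]
not $G_q(A)H_q(B)/q$; in your notation the correct identification is $g(q)=q^2\,\tilde g(q)\tilde h(q)$, not $q\,\tilde g(q)\tilde h(q)$. Second, in your commutativity step you write
\[
(-\mat T)\tilde g(-\mat T)\tilde h(-\mat T)(-\mat T)^{-1}\mat t=\tilde g(-\mat T)\,\tilde h(-\mat T)\,(-\mat T)\mat t,
\]
but moving $(-\mat T)$ past the two commuting factors and cancelling against $(-\mat T)^{-1}$ actually gives $\tilde g(-\mat T)\tilde h(-\mat T)\mat t$. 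Each error is off by one power of $-\mat T$, in opposite directions, so your final displayed formula matches the statement; with the correct $g(q)=q^2\tilde g(q)\tilde h(q)$ one gets $g(-\mat T)(-\mat T)^{-1}=\tilde g(-\mat T)\tilde h(-\mat T)(-\mat T)$ directly and cleanly. Once these are fixed, your proof and the paper's coincide.
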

\begin{proof}
Just note that
\[\int_0^\infty e^{-qt}\p(\overline X_t\in A,\overline X_t-X_t\in B)\D t=\int_0^\infty  e^{-qt}\p(\overline X_t\in A)\D t\cdot q\int_0^\infty e^{-qt} \p(-\underline X_t\in B)\D t,\]
extend this identity to $q\in \C_+$ and apply the functions on both sides to $-\mat T$ using Lemma~\ref{lem:calculus}.
By conditioning on $T$ we get the result. Countable additivity and finiteness of each term is easy to see.
\end{proof}

\subsection{The factors in spectrally-negative case}
In the case of a spectrally-negative L\'evy process $X$ both terms in Proposition~\ref{prop:WH_gen} can be written in a more explicit form.
Recall that $\overline X_{e_q}$ is exponential of rate $\Phi(q)$, whereas the law of $-\underline X_{e_q}$ is given by 
\[\p(-\underline X_{e_q}\leq x)=q\Big(\frac{1}{\Phi(q)}W_q(x)-\int_0^xW_q(y)\D y\Big),\qquad x\geq 0,\]
see~\cite[Eq.\ (8)]{bertoin_ergodicity}.
\begin{prop}\label{prop:WH}
For a square matrix $-\mat T$ with eigenvalues in $\C_+$ we have
\begin{align*}\int_0^\infty  e^{\mat Tt}\p(\overline X_t> x)\D t&=(-\mat T)^{-1}e^{-\Phi(-\mat T)x},\qquad x\geq 0,\\
\int_0^\infty  e^{\mat Tt}\p(-\underline X_t\leq x)\D t&=\Phi(-\mat T)^{-1}W_{-\mat T}(x)-\int_0^xW_{-\mat T}(y)\D y,\qquad x\geq 0.
\end{align*}
\end{prop}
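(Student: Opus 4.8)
The plan is to reduce both identities to scalar transform formulas that are analytic on $\C_+$ in the killing rate, and then substitute $-\mat T$ into them via Lemma~\ref{lem:calculus}, exactly as in the proofs of Theorems~\ref{thm:Phi} and~\ref{thm:W}. First I would record the scalar ingredients. Since $\p(\overline X_{e_q}>x)=\p(\tau_x^+<e_q)=e^{-\Phi(q)x}$ by~\eqref{eq:taux}, dividing out the rate gives
\[\int_0^\infty e^{-qt}\p(\overline X_t>x)\D t=\frac1q e^{-\Phi(q)x},\qquad q>0,\]
and from the law of $-\underline X_{e_q}$ displayed just before the proposition,
\[\int_0^\infty e^{-qt}\p(-\underline X_t\leq x)\D t=\frac{1}{\Phi(q)}W_q(x)-\int_0^x W_q(y)\D y=:H_x(q),\qquad q>0.\]
The left-hand sides converge absolutely and are analytic for $\Re(q)>0$; on the right, $\Phi$ is analytic on $\C_+$ with $\Phi(q)\in\C_+$ (hence $\Phi(q)\neq0$) by Lemma~\ref{lem:Phipsi}, $q\mapsto W_q(x)$ is entire by~\eqref{eq:W_analytic}, and $q\mapsto\int_0^x W_q(y)\D y$ is entire by termwise integration of that series. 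Thus both identities extend to all $q\in\C_+$ by the identity theorem.

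Next I would evaluate both sides at $-\mat T$, whose eigenvalues lie in $\C_+$. For the left-hand sides I use the integral rule of Lemma~\ref{lem:calculus} for $\sigma$-finite measures: the functions $z\mapsto\int_0^\infty e^{zt}\p(\overline X_t>x)\D t$ and $z\mapsto\int_0^\infty e^{zt}\p(-\underline X_t\leq x)\D t$ converge on $\{\Re(z)<0\}$, a domain containing the spectrum of $\mat T$, so evaluating at $\mat T$ returns the matrix integrals $\int_0^\infty e^{\mat Tt}\p(\overline X_t>x)\D t$ and $\int_0^\infty e^{\mat Tt}\p(-\underline X_t\leq x)\D t$, whose absolute convergence follows from the exponential decay of $\|e^{\mat Tt}\|$. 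For the right-hand sides I use the product, composition and power-series rules of Lemma~\ref{lem:calculus}: $(-\mat T)^{-1}$ and $\Phi(-\mat T)^{-1}$ are well defined since $0\notin\C_+$ and, by Theorem~\ref{thm:Phi}, the eigenvalues of $\Phi(-\mat T)$ also lie in $\C_+$; applying the power series~\eqref{eq:W_analytic} to $-\mat T$ reproduces $W_{-\mat T}(x)$ as in~\eqref{eq:WT}, and the integrated series reproduces $\int_0^x W_{-\mat T}(y)\D y$. Assembling the pieces yields the two stated formulas (the order of the matrix factors being irrelevant, as all are functions of $-\mat T$).

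The routine points that need care are the domain bookkeeping — the scalar identities live in the variable $q$, whereas the matrix substituted is $-\mat T$, so one really works with $z=-q$ on $\{\Re(z)<0\}$ — and the justification that $q\mapsto\int_0^x W_q(y)\D y$ is entire and commutes with substituting $-\mat T$, which is the same dominated-convergence argument used for continuity of $W_{-\mat T}$ in Theorem~\ref{thm:W}. One should also note, as before, that $\mat T$ need only be a matrix with spectrum in $\C_+$; no genuine ME distribution enters. The only mild obstacle is verifying that the scalar right-hand sides are analytic on all of $\C_+$, in particular the non-vanishing of $\Phi$, and this is precisely what Lemma~\ref{lem:Phipsi} supplies, so there is no real difficulty beyond putting the ingredients together.
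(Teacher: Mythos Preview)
Your proposal is correct and follows exactly the route the paper takes: write each identity as a scalar Laplace transform in the killing rate, note analyticity on $\C_+$ (using Lemma~\ref{lem:Phipsi} for $\Phi$ and the series~\eqref{eq:W_analytic} for $W_q$ and its integral), and then substitute $-\mat T$ via Lemma~\ref{lem:calculus}. The paper's own proof is the same argument stated more tersely, pointing to Section~\ref{sec:Z} for the analyticity of $q\mapsto\int_0^x W_q(y)\D y$.
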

\begin{proof}
The first statement follows by applying $q\int_0^\infty e^{-q t}\p(\overline X_t>x)\D t=e^{-\Phi(q)x}$ to $-\mat T$. 
We get the second from the above representation of $\p(-\underline X_{e_q}\leq x)$. Recall that the function $q\mapsto\int_0^x W_q(y)\D y$ has been discussed in Section~\ref{sec:Z}.
\end{proof}
Note that the above expression can be written in terms of $Z_{-\mat T}$ using the obvious identity
\[\int_0^xW_{-\mat T}(y)\D y=(Z_{-\mat T}(0,x)-\mat I)(-\mat T)^{-1}.\]
Let us mention that the known bivariate transform~\cite[Thm.\ VII.4]{bertoin_book} can also be easily extended:
\[\e e^{-u\overline X_T-v(\overline X_T-X_T)}=\ba\big(u\mat{I}+\Phi(-\mat{T})\big)^{-1}\big(v\mat{I}-\Phi(-\mat{T})\big)\big(\psi(v)\mat{I}+\mat{T}\big)^{-1} \mat{t}\]
for $u,v\geq 0$ such that the latter inverse is well-defined.

\subsection{On the derivative of the scale function}
It is well known that $W_q(x),q\geq 0$ is differentiable in $x$ apart from countably many points with the right derivative well-defined for all $x\geq 0$. Furthermore, $W_q(0)=c$ where 
\begin{equation}\label{eq:c}
c=1/d\text{ if }X\text{ is a b.v.\ process with linear drift }d>0,\text{ and }c=0\text{ otherwise}.
\end{equation}
By convention we choose the right derivative $W'_q(x)=\partial_+ W_{q}(x)$ and note that $W_q(\D y)=W'_q(y)\D y+c\delta_0(\D y),y\geq 0$.
The following basic result is proven in Appendix.
\begin{lemma}\label{lem:Wdiff}
For any $q\in\C,x\geq 0$ it holds that 
\begin{align*}
W'_q(x)=\partial_+ \sum_{k\geq 0} q^k W_0^{*(k+1)}(x)=\sum_{k\geq 0} q^k \partial_+ W_0^{*(k+1)}(x),
\end{align*}
and the series is absolutely convergent.
\end{lemma}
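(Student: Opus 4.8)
The plan is to justify differentiating the analytic-continuation series~\eqref{eq:W_analytic} term by term, which amounts to proving absolute convergence of $\sum_{k\geq 0} q^k \partial_+ W_0^{*(k+1)}(x)$ locally uniformly, so that the sum is well-behaved and equals $\partial_+$ of the original sum. First I would recall the known bound on the scale function itself: there is a constant (depending on $x$ only through an upper bound) such that $W_0(y)\leq C e^{\Phi(0)y}$ for $y\geq 0$, equivalently $W_0\in L^1_{\mathrm{loc}}$ with controlled growth, and hence $W_0^{*(k+1)}(x)\leq C^{k+1} x^k/k!\, e^{\Phi(0)x}$ by an elementary induction on convolution powers of a function dominated by a constant times $e^{\Phi(0)\cdot}$. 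This gives the absolute convergence of~\eqref{eq:W_analytic} for every $q\in\C$ (which is already stated in the excerpt), and more importantly it is the template for the derivative estimate.

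The substantive step is to obtain the analogous bound for $\partial_+ W_0^{*(k+1)}(x)$. For $k\geq 1$ one writes $W_0^{*(k+1)} = W_0 * W_0^{*k}$ and differentiates the convolution: since $W_0^{*k}$ is continuous (indeed locally absolutely continuous for $k\geq 1$, being a convolution of the locally integrable $W_0$ with $W_0^{*(k-1)}$ when $k\geq 2$, and equal to $W_0*W_0$ when $k=1$), one gets
\[
\partial_+ W_0^{*(k+1)}(x) = W_0(0)\,W_0^{*k}(x) + \int_0^x \partial_+ W_0(x-y)\, W_0^{*k}(y)\,\D y ,
\]
or, symmetrically, by moving the derivative onto the smooth factor, $\partial_+ W_0^{*(k+1)}(x) = \int_0^x W_0(x-y)\, \partial_+ W_0^{*k}(y)\,\D y + c\, W_0^{*k}(x)$, where $c$ is as in~\eqref{eq:c}; here one must be a little careful with the atom $c\delta_0$ in $W_0(\D y)$, which is exactly why the right derivative and the decomposition $W_0(\D y)=W_0'(y)\D y + c\delta_0(\D y)$ are the correct objects. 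Combining this recursion with the bound $W_0^{*k}(y)\leq C^{k} y^{k-1}/(k-1)!\, e^{\Phi(0)y}$ and a bound of the form $\partial_+ W_0(y)\leq C'(1+W_0(y))$ valid on compacts (which follows from the known local-boundedness of the right derivative away from its countably many jump points, together with monotonicity), an induction yields $|\partial_+ W_0^{*(k+1)}(x)|\leq C''^{\,k+1} x^k/k!\, e^{\Phi(0)x}$ up to harmless polynomial corrections. Then $\sum_k |q|^k |\partial_+ W_0^{*(k+1)}(x)| \leq e^{\Phi(0)x}\sum_k (C''|q|)^k x^k/k! <\infty$, locally uniformly in $x$ and $q$.

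With locally uniform absolute convergence of the differentiated series in hand, the interchange $\partial_+ \sum_k q^k W_0^{*(k+1)}(x) = \sum_k q^k \partial_+ W_0^{*(k+1)}(x)$ follows from the standard theorem on term-by-term differentiation of series (using right derivatives: the partial sums converge at one point, their right-derivative series converges locally uniformly, so the limit is right-differentiable with the expected derivative). I expect the main obstacle to be the bookkeeping around the atom in $W_0(\D y)$ and the countable exceptional set of non-differentiability: one wants a clean recursion for $\partial_+ W_0^{*(k+1)}$ that is valid for \emph{all} $x\geq 0$ (not just a.e.), and the cleanest route is to pass the derivative onto the $W_0^{*k}$ factor for $k\geq 1$, since $W_0^{*k}$ is genuinely (right-)differentiable everywhere for $k\geq 1$, thereby confining all irregularity to the innermost $W_0$ and to a single boundary term $c\,W_0^{*k}(x)$. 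Everything else is the routine growth estimate above.
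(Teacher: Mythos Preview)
Your strategy---bound the individual derivatives $\partial_+ W_0^{*(k+1)}$ and then invoke a term-by-term differentiation theorem---is workable in spirit but contains a genuine gap: the pointwise estimate $\partial_+ W_0(y)\leq C'(1+W_0(y))$ on compacts that you rely on is false in general. When $\sigma=0$ and $X$ has paths of unbounded variation one has $W_0'(0+)=+\infty$ (e.g.\ by a Tauberian argument, since then $\theta^2/\psi(\theta)\to\infty$), and more broadly the right derivative of a monotone continuous function need not be locally bounded. Your justification (``local boundedness of the right derivative away from its countably many jump points, together with monotonicity'') conflates a.e.\ differentiability with boundedness of the derivative; neither implies the other. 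The induction as written therefore does not go through.

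The paper avoids this issue entirely by bounding the \emph{difference quotients} rather than the derivatives. Writing $W^{*(k+1)}(\cdot)=\int_0^{\,\cdot} W^{*k}(y)W(\cdot-y)\,\D y$ one obtains
\[
\frac{W^{*(k+1)}(x+h)-W^{*(k+1)}(x)}{h}=\frac{1}{h}\int_x^{x+h}W^{*k}(y)W(x+h-y)\,\D y+\frac{1}{h}\int_0^{x}W^{*k}(y)\bigl(W(x+h-y)-W(x-y)\bigr)\D y,
\]
and using only that $W$ and $W^{*k}$ are nondecreasing the two pieces are bounded by $W^{*k}(x+h)W(h)$ and $W^{*k}(x)W(x+h)$ respectively. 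These bounds are uniform for $h\in(0,1]$ and summable against $|q|^k$ because $\sum_{k\geq 1}|q|^k W^{*k}(\cdot)=|q|\,W_{|q|}(\cdot)<\infty$; dominated convergence as $h\downarrow 0$ then gives the interchange directly, with no regularity of $W_0'$ required beyond the existence of the right limits.

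Your route can be repaired: replace the pointwise bound on $W_0'$ by the integrated one $\int_0^x W_0'(u)\,\D u\leq W_0(x)-W_0(0)$, which follows from monotonicity alone. Your formula~(a) then yields $|\partial_+ W_0^{*(k+1)}(x)|\leq c\,W_0^{*k}(x)+W_0^{*k}(x)\bigl(W_0(x)-c\bigr)=W_0^{*k}(x)W_0(x)$, which is precisely the paper's second bound. But at that point you have essentially reproduced the difference-quotient argument with additional scaffolding (and you still owe a justification of the differentiation-under-the-integral step producing~(a)); the paper's route is both shorter and needs fewer regularity hypotheses.
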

Thus we may define $W'_{-\mat T}(x)\in\R^{p\times p}$ and using the bounds in the proof of Lemma~\ref{lem:Wdiff} we readily find that
\[W'_{-\mat{T}}(x)=\sum_{k\geq 0} (-\mat T)^k \partial_+ W_0^{*(k+1)}(x)=\partial_+ \Big(W_{-\mat{T}}(x)\Big).\]
Finally, we obtain
\[W_{-\mat T}(x)=c\mat I+\int_0^x W'_{-\mat T}(y)\D y,\qquad x\geq 0,\]
which in combination with Proposition~\ref{prop:WH_gen} and Proposition~\ref{prop:WH} gives
\begin{multline}\label{eq:WHdens}\p(\overline X_T\in \D x,\overline X_T-X_T\in \D y)=\p(X_T-\underline X_T\in \D x,-\underline X_T\in \D y)\\=\ba e^{-\Phi(-\mat{T})x}\D x\times\Big(c\mat{I}\delta_0(\D y)+(W'_{-\mat{T}}(y)-\Phi(-\mat{T})W_{-\mat{T}}(y))\D y\Big)\mat{t}\end{multline}
for $x,y\geq 0$ and $c$ defined in~\eqref{eq:c}.

\subsection{An application to financial mathematics}
Exponentials of L\'evy processes are used extensively in financial mathematics to model stock prices.
They also appear in so--called unit--linked (or equity linked) life insurance models where an option is exercised upon death of an insured. 
In this context it is important that one can specify the distribution of the time horizon $T$ in a general and tractable way. 
In \cite{Gerber-Shiu-2013} the time horizon $T$ is assumed to have a density which is a linear combination (not necessarily being a mixture) of exponential densities, the class of which is dense in the class of distributions on the positive reals. 
In this way the classical Wiener--Hopf decomposition can be employed for each of the terms in the combination. 
They further assume that jumps (in both directions) are also combinations of exponentials, leading to a rational L\'evy exponent. 
Below we provide a respective formula for an arbitrary ME time horizon $T$ and a spectrally-negative L\'evy process.
%In this model there is not need for assuming spectral negativity. 

%\begin{proof}
%We have
%\[\e\big(u-e^{\underline X_{e_q}}\big)^+=u-\e e^{\underline X_{e_q}}-\e(u-e^{\underline X_{e_q}}\big)\wedge 0,\]
%where the last term is
%\begin{align*}
%q\int_0^{-\log u}(u-e^{-y})(W_q(\D y)/\Phi(q)-W_{q}(y)\D y).
%\end{align*}
%Consider for $b>0$
%\begin{align*}
%&\int_0^b(u-e^{-y})(W_q(\D y)/\Phi(q)-W_q(y)\D y)\\
%&=uW_q(b)/\Phi(q)-u\int_0^bW_{q}(y)\D y-\Big(\int_0^be^{-y}W_q(y)\D y+e^{-b}W_q(b)\Big)/\Phi(q)+\int_0^b e^{-y}W_q(y)\D y\\
%&=(u-e^{-b})W_q(b)/\Phi(q)-u\int_0^bW_{q}(y)\D y+(1-1/\Phi(q))\int_0^b e^{-y}W_q(y)\D y.
%\end{align*}
%Putting this together we get
%\[u-\frac{q}{\Phi(q)}\frac{\Phi(q)-1}{q-\psi(1)}+qu\int_0^{-\log u}W_{q}(y)\D y-q(1-1/\Phi(q))\int_0^{-\log u} e^{-y}W_q(y)\D y,\]
%and it is left to rewrite in terms of $Z$.
%\end{proof}

In financial mathematics context, the following identity is well known:
\[
e^u\e\big(e^{-u}-e^{\underline X_{e_q}}\big)^+=Z_q(0,u)-\frac{q}{\Phi(q)}\frac{\Phi(q)-1}{q-\psi(1)}Z_q(1,u),\qquad q>0,u\geq 0.
\]
see~\cite[Cor.\ 9.3]{kyprianou} and references therein. One may also derive this identity in a few different ways using the above presented formulas in the classical setting of~$q>0$.
Now we can extend this result to ME time horizon, where we also include the second Wiener-Hopf factor.
The following result can be proven using~\eqref{eq:WHdens} and other related ME formulas, but it is much easier to simply extend the desired formula from exponential to ME time.

\begin{lemma}\label{lem:finance}
Let $\beta\in\R$ and consider $T\sim\mbox{ME}(\ba,\mat T,\mat t)$ independent of $X$, such that the real parts of the eigenvalues of $-\mat T$ exceed~$\psi(\beta)$ when $\beta>\Phi(0)$.
Then for $u\geq 0$ we have
\begin{multline}
e^{u}\e\Big(\big(e^{-u}-e^{\underline X_T}\big)^+e^{\beta(X_T-\underline X_T)}\Big)
=\ba\Big(\Phi(-\mat T)(\Phi(-\mat T)-\beta\mat I)^{-1}Z_{-\mat T}(0,u)\\
+(-\mat T)(\Phi(-\mat T)-\mat I)(\Phi(-\mat T)-\beta\mat I)^{-1}(\mat T+\psi(1)\mat I)^{-1}Z_{-\mat T}(1,u)\Big)\mat l,
\end{multline}
assuming that $\psi(1)$ is not an eigenvalue of $-\mat T$.
\end{lemma}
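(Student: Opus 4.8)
Here is how I would prove Lemma~\ref{lem:finance}, following the route suggested just before the statement: upgrade the known scalar identity displayed above (the case $\beta=0$) to general $\beta$ and an exponential horizon, continue it analytically in the killing rate, and then feed it to $-\mat T$ through the functional calculus of Lemma~\ref{lem:calculus}, exactly as in the proofs of Theorem~\ref{thm:Phi} and Theorem~\ref{thm:W}.

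First I would establish, for real $q$ large enough, the scalar identity
\[
e^{u}\e\Big(\big(e^{-u}-e^{\underline X_{e_q}}\big)^+e^{\beta(X_{e_q}-\underline X_{e_q})}\Big)=\frac{\Phi(q)}{\Phi(q)-\beta}\,Z_q(0,u)-\frac{q\,(\Phi(q)-1)}{\big(\Phi(q)-\beta\big)\big(q-\psi(1)\big)}\,Z_q(1,u).
\]
Indeed, by the Wiener--Hopf factorization together with time reversal (see Section~\ref{sec:WH}), the overshoot $X_{e_q}-\underline X_{e_q}$ is exponentially distributed with rate $\Phi(q)$ and independent of $\underline X_{e_q}$. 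Since $(e^{-u}-e^{\underline X_{e_q}})^+$ depends only on $\underline X_{e_q}$, the expectation factorizes; multiplying the known $\beta=0$ identity by $\e e^{\beta(X_{e_q}-\underline X_{e_q})}=\Phi(q)/(\Phi(q)-\beta)$, which is finite as soon as $\Phi(q)>\beta$, yields the display.

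Next I would verify that both sides are analytic on the half-plane $H=\{z\in\C:\Re(z)>\mu_\beta\}$, where $\mu_\beta:=\psi(\beta)$ if $\beta>\Phi(0)$ and $\mu_\beta:=0$ otherwise, so that the identity propagates from $(\mu_\beta,\infty)$ to all of $H$ by the identity theorem. For the right-hand side one uses that $q\mapsto\Phi(q)$ is analytic on $\C_+$ (Lemma~\ref{lem:Phipsi}) and $q\mapsto Z_q(0,u),Z_q(1,u)$ are analytic (Lemma~\ref{lem:Z}); that the factor $q-\psi(1)$ in the denominator is cancelled by the zero of $\Phi(q)-1$ at $q=\psi(1)$, since $\Phi(\psi(1))=1$ when $\psi(1)>0$ by Lemma~\ref{lem:Phipsi}, so that point is only a removable singularity; and that, by Lemma~\ref{lem:Phipsi}, $\Phi(q)-\beta$ has at most one zero in $\C_+$, namely $q=\psi(\beta)$ when $\beta>\Phi(0)$, which does not lie in $H$. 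For the left-hand side one notes that it is the $q$-Laplace transform in $s$ of the nonnegative function $s\mapsto\e\big(e^{u}(e^{-u}-e^{\underline X_s})^+e^{\beta(X_s-\underline X_s)}\big)$, which is bounded by $1+\e e^{\beta\overline X_s}$ (using $e^{u}(e^{-u}-e^{\underline X_s})^+\le 1$ and $X_s-\underline X_s\eqd\overline X_s$) and hence of exponential order at most $\mu_\beta$, so its transform is analytic on $H$. This analyticity bookkeeping is, I expect, the main obstacle: one must keep the genuine pole at $q=\psi(\beta)$ (present when $\beta>\Phi(0)$) outside $H$ and confirm that the apparent pole at $q=\psi(1)$ is truly removable.

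Finally, the hypotheses on $\mat T$ place the eigenvalues of $-\mat T$ inside $H$ and make $\Phi(-\mat T)-\beta\mat I$ and $\mat T+\psi(1)\mat I$ invertible, so one may apply both sides of the identity above to $-\mat T$ using Lemma~\ref{lem:calculus} and then condition on $T$ as in~\eqref{eq:tool}. Under $q\mapsto-\mat T$ one has $\Phi(q)\mapsto\Phi(-\mat T)$, $q-\psi(1)\mapsto-(\mat T+\psi(1)\mat I)$, and $Z_q(j,u)\mapsto Z_{-\mat T}(j,u)$, and all the matrices in sight are functions of $\mat T$ and hence commute; tracking the two minus signs in the second coefficient then reproduces the stated formula.
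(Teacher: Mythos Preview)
Your proposal is correct and follows essentially the same route as the paper's proof: derive the scalar identity for $e_q$ by multiplying the known $\beta=0$ formula by $\e e^{\beta(X_{e_q}-\underline X_{e_q})}=\Phi(q)/(\Phi(q)-\beta)$, analytically continue in $q$ to the appropriate half-plane, and then apply~\eqref{eq:tool}. The only cosmetic difference is that the paper clears the denominator by multiplying both sides by $(\psi(1)-q)$ before continuing analytically, thereby sidestepping your removable-singularity discussion at $q=\psi(1)$, and then divides by $(\psi(1)\mat I+\mat T)$ at the very end; your explicit handling of that point and of the invertibility of $\Phi(-\mat T)-\beta\mat I$ is equivalent.
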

\begin{proof}
Recall that $\e \exp\left(\beta(X_{e_q}-\underline X_{e_q})\right)=\Phi(q)/(\Phi(q)-\beta)$ assuming $\Phi(q)>\beta$.
If $\beta\leq \Phi(0)$ then this condition is always satisfied, and otherwise it is equivalent to $q>\psi(\beta)$.
Now for $T=e_q$ we have
\begin{align*}
e^{u}\e\Big(\big(e^{-u}-e^{\underline X_T}\big)^+e^{\beta(X_T-\underline X_T)}\Big)(\psi(1)-q)=(\psi(1)-q)\frac{\Phi(q)}{\Phi(q)-\beta}Z_{q}(0,u)+q\frac{\Phi(q)-1}{\Phi(q)-\beta}Z_{q}(1,u).
\end{align*}
In the case $\beta\leq \Phi(0)$ this identity readily extends to $q\in\C_+$ and the result follows from~\eqref{eq:tool}.
Otherwise, the formula is true for $q\in\C_+$ such that $\Re(q)>\psi(\beta)$ and we may use~\eqref{eq:tool} by assumption on the eigenvalues of~$-\mat T$.
Finally, if $\Phi(-\mat T)$ had an eigenvalue $\beta>\Phi(0)$ then $\psi(\beta)$ would be an eigenvalue of $-\mat T$ according to $\psi(\Phi(-\mat T))=-\mat T$.
Thus $\Phi(-\mat T)-\mat I$ is invertible.
\end{proof}

\section{Examples}
\subsection{Strictly stable L\'evy processes}\label{sec:stable}
A strictly stable (or self-similar) L\'evy process, $(X_t)_{t\geq 0}$, is characterized by the property that $(X_{ct})_{t\geq 0}$ has the law of $(c^{1/\a}X_t)_{t\geq 0}$ for any $c>0$ and some~$\a>0$, see~\cite[Ch.\ 3]{sato}.
In the spectrally-negative case it must be that $\a\in(1,2]$, where $\a=2$ corresponds to a Brownian motion.
If we (without real loss of generality) fix the scale parameter so that $\psi(1)=1$, then we readily get
\begin{equation}\label{eq:stable}\psi(\theta)=\theta^{\a},\qquad\a\in(1,2],\ \theta\geq 0.\end{equation}
Such processes have received a great deal of attention in the literature, see, for example,~\cite{bertoin_stable,peskir,Coqueret}.

Here we derive explicit formulas for the basic objects appearing in the above fluctuation identities. 
Firstly, $\Phi(q)=q^{1/\a},q\geq 0$ and thus
\[\Phi(-\mat{T})=(-\mat T)^{1/\a}.\]
The other formulas are based on the 
Mittag-Leffler function:
\[E_{\a,\beta}(z)=\sum_{n=0}^\infty\frac{z^n}{\Gamma(\a n+\beta)},\qquad \alpha,\beta>0,\]
which is analytic for $z\in \C$.
The scale function $W_q(x)$ is given in terms of the derivative of the Mittag-Leffler function~\cite{bertoin_ergodicity}:
$W_q(x)=\a x^{\a-1}E'_{\a,1}(qx^\a),\quad q,x\geq 0$. 
Noting that
\[  E_{\a,1}^\prime (z)=\sum_{n=1}^\infty\frac{nz^{n-1}}{\Gamma(1+\a n)} = \sum_{n=1}^\infty \frac{n z^{n-1}}{\a n \Gamma(\a n)}  = \alpha^{-1} E_{\a,\a}(z), \]
we arrive at the formula
\[W_{-\mat{T}}(x)=\a x^{\a-1}E'_{\a,1}(-x^\a\mat{T})= x^{\alpha-1} E_{\a,\a} 
\left( -x^{\a}\mat{T}  \right) 
, \qquad x\geq 0.\]
From the well known identity~\cite[Sec. 4.3]{gorenflo2014mittag}
%\[ \left(\frac{\mathrm{d}}{\mathrm{d} z}\right)^{m}\left[z^{\beta-1} E_{\alpha, \beta}\left(z^{\alpha}\right)\right]=z^{\beta-m-1} E_{\alpha, \beta-m}\left(z^{\alpha}\right),\quad m \geq 1 \]
we then obtain
\[  W^\prime_{-\mat{T}}(x)=x^{\alpha-2}  E_{\alpha,\alpha-1}(-x^\alpha\mat{T})  \]
Results concerning the second scale function are formulated in the following lemma.
\begin{lemma}
In the stable case there are the identities for $x\geq 0$
\[Z_{-\mat{T}}(0,x)=\Big(\mat{I}+E_{\a,1}(-\mat{T}x^{\a})\Big),\qquad Z_{-\mat{T}}(\theta,x)=\theta^{-\a}\sum_{n=0}^\infty (-\theta^{-\a}\mat{T})^n 
\frac{\gamma (x\theta; \alpha n + \alpha)}{\Gamma (\alpha n +\alpha)},
 \]
 where $\gamma (y;\beta)=\int_0^y x^{\beta-1}e^{-z}\D z$ denotes the lower incomplete Gamma function.
\end{lemma}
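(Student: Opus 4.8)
The plan is to substitute the Mittag--Leffler representation of the scale matrix derived just above, $W_{-\mat T}(y)=y^{\alpha-1}E_{\alpha,\alpha}(-y^{\alpha}\mat T)=\sum_{n\geq 0}\frac{(-\mat T)^n}{\Gamma(\alpha n+\alpha)}\,y^{\alpha n+\alpha-1}$, into the definition of $Z_{-\mat T}(\theta,x)$ from the previous theorem and to integrate term by term. First I would treat $\theta=0$: here $\psi(0)=0$, so the definition collapses to $Z_{-\mat T}(0,x)=\mat I-\mat T\int_0^x W_{-\mat T}(y)\,\D y$. Integrating the series term by term and using $\Gamma(\alpha n+\alpha+1)=(\alpha n+\alpha)\Gamma(\alpha n+\alpha)$ gives $\int_0^x W_{-\mat T}(y)\,\D y=\sum_{n\geq 0}\frac{(-\mat T)^n x^{\alpha(n+1)}}{\Gamma(\alpha(n+1)+1)}$; shifting the index to $k=n+1$ and multiplying by $\mat T$ (which commutes with the series and is invertible, since the eigenvalues of $-\mat T$ lie in $\C_+$) one recognizes $\sum_{k\geq 1}\frac{(-x^{\alpha}\mat T)^k}{\Gamma(\alpha k+1)}=E_{\alpha,1}(-x^{\alpha}\mat T)-\mat I$, and the first identity follows by a routine simplification.

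For general $\theta>0$ the same term-by-term integration applies, but now $\int_0^x e^{-\theta y}y^{\alpha n+\alpha-1}\,\D y$ is evaluated via the substitution $z=\theta y$, which turns it into $\theta^{-(\alpha n+\alpha)}\gamma(\theta x;\alpha n+\alpha)$ with $\gamma$ the lower incomplete Gamma function of the statement. This yields $\int_0^x e^{-\theta y}W_{-\mat T}(y)\,\D y=\theta^{-\alpha}\sum_{n\geq 0}(-\theta^{-\alpha}\mat T)^n\frac{\gamma(\theta x;\alpha n+\alpha)}{\Gamma(\alpha n+\alpha)}$; substituting this into $Z_{-\mat T}(\theta,x)=e^{\theta x}\big(\mat I-(\theta^{\alpha}\mat I+\mat T)\int_0^x e^{-\theta y}W_{-\mat T}(y)\,\D y\big)$ and using $(\theta^{\alpha}\mat I+\mat T)\theta^{-\alpha}=\mat I-(-\theta^{-\alpha}\mat T)$ gives the second identity after collecting terms. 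The continuity and invertibility of $Z_{-\mat T}(\theta,\cdot)$ are already part of the preceding theorem and need not be revisited.

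The only genuinely delicate point is the interchange of $\int_0^x$ with the infinite matrix series, together with the subsequent reindexing of the Mittag--Leffler series at the matrix level. This is justified because on the compact interval $[0,x]$ the series converges absolutely and uniformly: combine the norm bound $\|(-\mat T)^n\|\leq C_{\varepsilon}(\rho_{\mat T}+\varepsilon)^n$ with the super-exponential decay of $1/\Gamma(\alpha n+\beta)$, exactly as in the continuity argument used in the proof of the theorem on $W_{-\mat T}$. Once that is in place the remaining steps are elementary bookkeeping with $\Gamma$-function identities and the fact that $\mat T$ (hence $\theta^{\alpha}\mat I+\mat T$ for the relevant $\theta$) commutes with everything involved.
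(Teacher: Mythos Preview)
Your approach matches the paper's: expand $W_{-\mat T}(y)$ via its Mittag--Leffler series and integrate term by term, with the interchange justified by absolute convergence on $[0,x]$ (the paper phrases this as ``a dominated convergence argument'').

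Where your plan goes astray is the final step for the second identity. You propose to substitute the computed series for $\int_0^x e^{-\theta y}W_{-\mat T}(y)\,\D y$ back into $Z_{-\mat T}(\theta,x)=e^{\theta x}\big(\mat I-(\theta^\alpha\mat I+\mat T)\int_0^x\cdots\big)$ and then ``collect terms'' to reach the displayed right-hand side. The paper does not do this, and in fact you cannot: the displayed series
\[
\theta^{-\alpha}\sum_{n\geq 0}(-\theta^{-\alpha}\mat T)^n\,\frac{\gamma(x\theta;\alpha n+\alpha)}{\Gamma(\alpha n+\alpha)}
\]
is precisely the integral you have already obtained, not $Z$ itself --- at $x=0$ it vanishes, whereas $Z_{-\mat T}(\theta,0)=\mat I$. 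The paper's own proof confirms this reading: it stops once the integral is expressed as the incomplete-Gamma series, with no further substitution. A parallel caution applies to the first identity: carrying out your ``routine simplification'' honestly yields $E_{\alpha,1}(-x^\alpha\mat T)$ rather than $\mat I+E_{\alpha,1}(-x^\alpha\mat T)$ (again test $x=0$); the paper's one-line argument makes the same slip, writing $\int_0^x W_q(y)\,\D y=E_{\alpha,1}(qx^\alpha)/q$ where the correct value is $(E_{\alpha,1}(qx^\alpha)-1)/q$. So your method is sound and coincides with the paper's; just stop one step earlier and do not attempt the final substitution-and-collection, which cannot close up to the formulas as printed.
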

\begin{proof}
We readily find that $\int_0^x W_q(y)\D y=E_{\a,1}(qx^{\a})/q$ leading to the first formula.
% In general, we employ integration by parts
% \[\int_0^x e^{-\theta y}W_q(y)\D y=\theta/q\int_0^xe^{-\theta y}E_{\a,1}(qy^{\a})\D y+e^{-\theta x}E_{\a,1}(qx^{\a}).\]
% \jicomment{Can we simplify the integral?}
For the second identity, simply write $E_{\a,\a}(-x^{\alpha}\mat{T})$ in terms of its series expansion and interchange summation and integration, which can be justified by a dominated convergence argument. 
%\mbcomment{More details if needed but it should be rather trivial}
\end{proof}

\subsection{The scale function in the case of ME jumps}\label{sec:MEjumps}
Here we assume that the generator matrix is diagonalizable, see~\eqref{eq:diagonalizable}, which readily gives
\[W_{-\mat T}(x)=\mat P\diag_k\big(W_{\lambda_k}(x)\big)\mat P^{-1}.\]
Thus it is sufficient to obtain $W_q(x)$ for $q\in\C_+$.

Assume that $X$ has finite jump activity and the jump distribution is ME. In other words we consider a Cram\'er-Lundberg process with ME jumps possibly perturbed by an independent Brownian motion.
The classical ($q\geq 0$) scale function is explicit in this case, see~\cite{yamazaki,scale_review}.
It is given in terms of the zeros of the rational function $\psi(\theta)-q$. %, where the degree of the numerator exceeds the degree of the denominator.
Importantly, this result readily extends to any $q\in\C$:
\[
W_q(x)=\sum_{z\in\C:\;\psi(z)=q}\frac{1}{\psi'(z)}e^{z x},\qquad x\geq 0,
\]
assuming that the roots are distinct. Indeed, its transform is $\sum \frac{1}{\psi'(z_i)(\theta-z_i)}$ which is a partial fraction decomposition of $1/(\psi(\theta)-q)$.
The case of non-distinct roots, which is a bit more cumbersome, is also analogous to the case $q\geq 0$.

\section{Numerics}\label{sec:numerics}
For the purpose of illustration we consider 
a strictly stable  L\'evy process $X$ with $\psi(\theta)=\theta^\a$ and $\a=1.5$ as discussed in Section~\ref{sec:stable}, and an ME density
\[  f_T(x)=\frac{17}{9}e^{-x}\cos^2(2x),\qquad x\geq 0 .  \]
This is a genuine matrix exponential distribution which is not of phase--type. 
A (canonical) representation is given by 
\[   \ba= \left( -\frac{8}{9},-\frac{34}{9},\frac{17}{3}   \right), \ \ \mat{T} =
\begin{pmatrix}
0 &-17 & 17 \\
3 & 2 & -6 \\
2 & 2 & -5 
\end{pmatrix},
  \]
%and, consequently, $\vect{t}=-\mat{T}\vect{1}$. 
and the eigenvalues for $\mat{T}$ are $-1,-1\pm 4i$. 
Some paths of $X$ and the ME density $f_T$ are depicted in Figure~\ref{fig:paths}.
  \begin{figure}[h!]
  \includegraphics[width=0.47\textwidth]{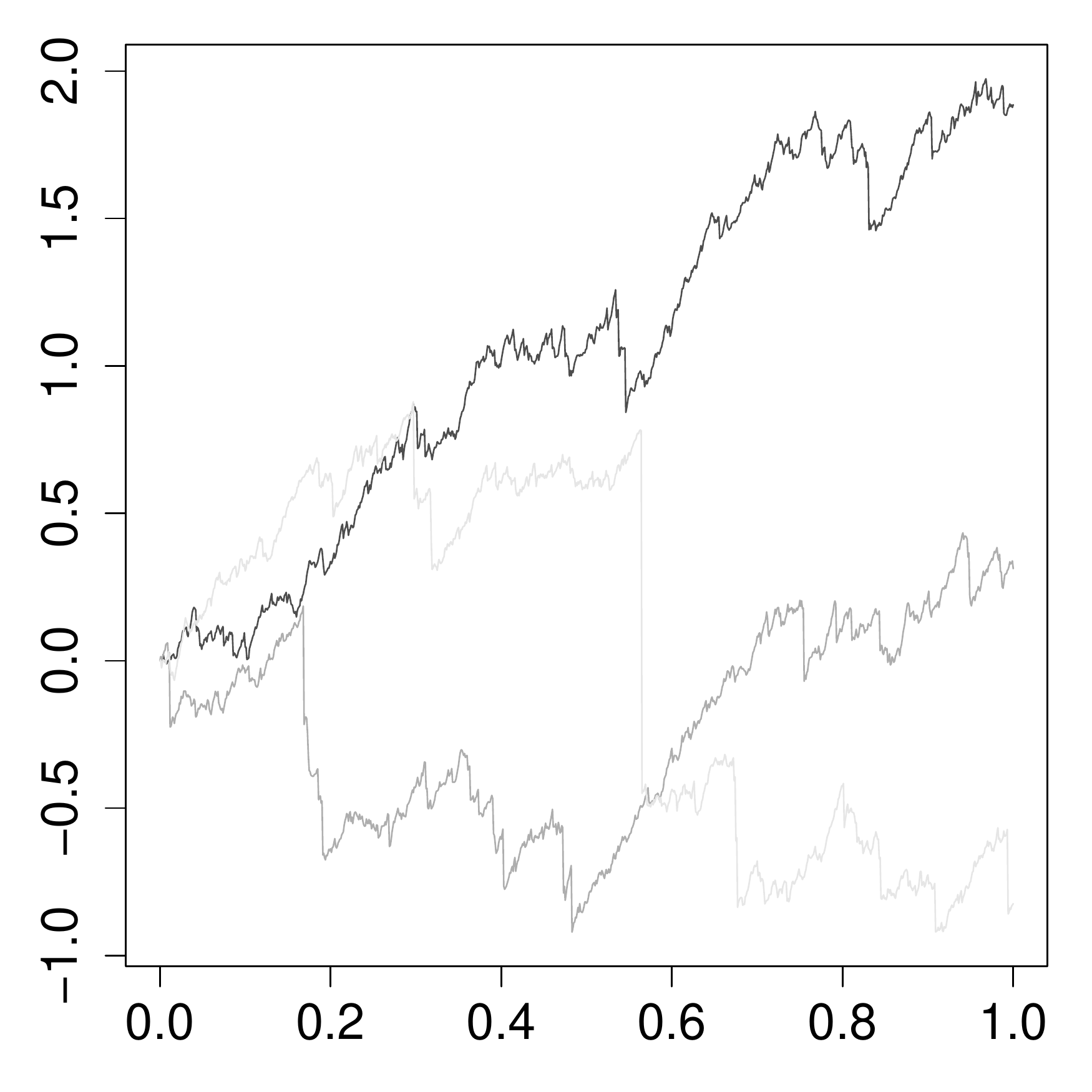}\quad
  \includegraphics[width=0.47\textwidth]{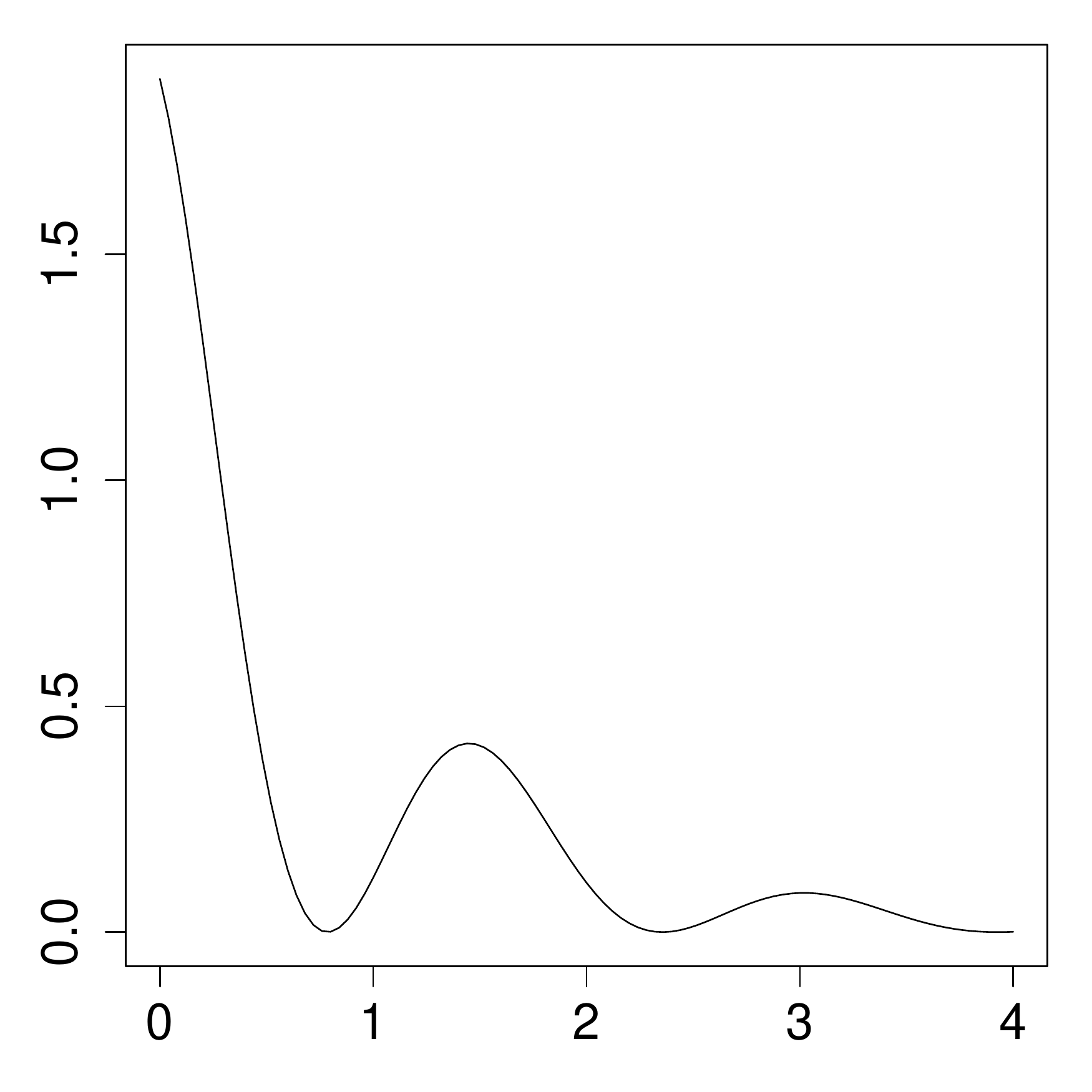}
  \caption{Sample paths of $X$ and the ME density $f_T(x)$.}
  \label{fig:paths}
  \end{figure}

Our numerical experiments are carried out in \textsc{R} language. All the implemented formulas are checked against simulations, where the process $X$ is sampled on the grid with step size $0.001$.
The time horizon $T$ is simulated by inversion which uses a numerical root finding routine.
The number of sampled paths is set to $3000$. Note that there is both a systematic error due to sampling on the grid and a Monte Carlo error.

% We calculate the generator of the ME distribution of supremum~$\overline X_T$:
% \[\Phi(-\mat T)=(-\mat T)^{1/\a}=
% \begin{pmatrix}
% 1.13 & 9.79 & -10.46 \\
% -1.49 & 1.64  & 0.74 \\
%  -0.99 & 0.42 &  1.49
% \end{pmatrix}.
%   \] 
Firstly, we compute %the basic constituents $\Phi(-\mat T)$ and $W_{-\mat T}(x)$ of our formulas.
\[\Phi(-\mat T)=(-\mat T)^{1/\a}=
\begin{pmatrix}
1.13 & 9.79 & -10.46 \\
-1.49 & 1.64  & 0.74 \\
 -0.99 & 0.42 &  1.49
\end{pmatrix}.
  \] 
Secondly, the scale function $W_{-\mat T}(x)$ is implemented using diagonalization and the standard  Mittag-Leffler function, see Section~\ref{sec:stable}.
We provide its entries in Figure~\ref{fig:W}.
Observe that these need not be positive or monotone. Note also that one may expect numerical issues for larger argument values, and so it may be useful to rewrite certain formulas in terms of a normalized version $e^{-\Phi(-\mat T)x}W_{-\mat T}(x)$.
  \begin{figure}[h!]
  \includegraphics[width=0.47\textwidth]{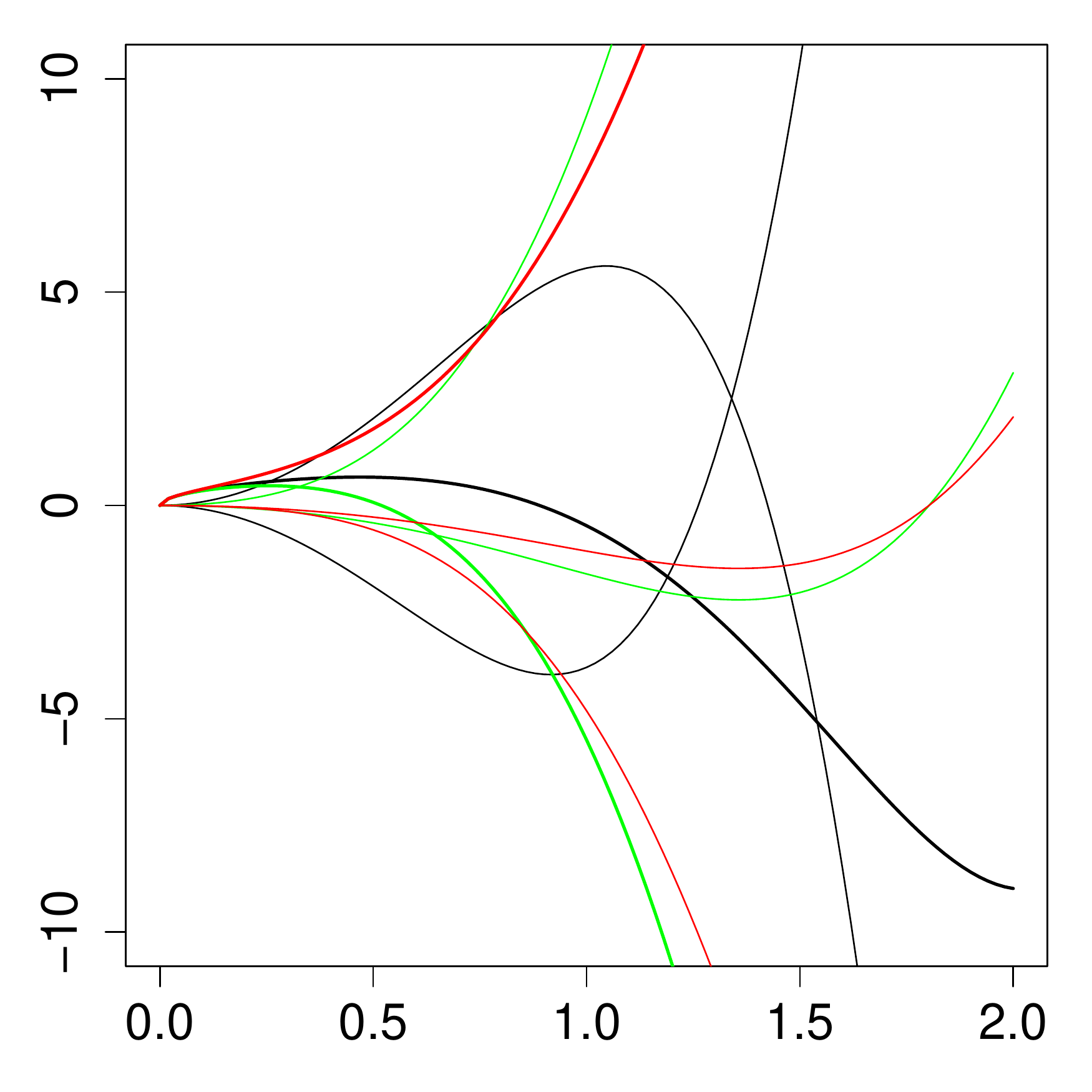}\quad
    \includegraphics[width=0.47\textwidth]{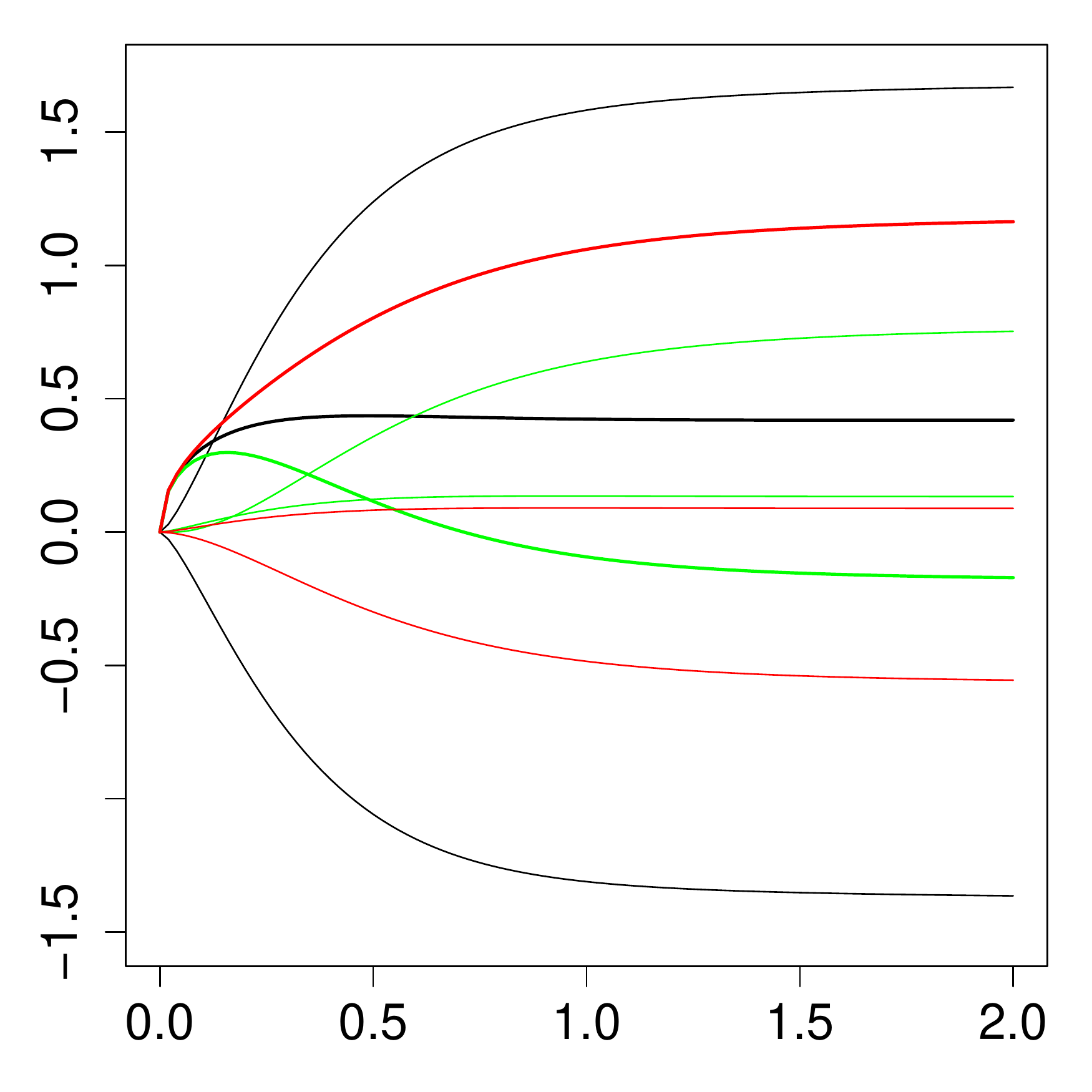}
  \caption{The entries of the matrix-valued scale function $W_{-\mat T}(x)$ and its modified version $e^{-\Phi(-\mat T)x}W_{-\mat T}(x)$: colors correspond to rows and the diagonal elements are in bold.}
  \label{fig:W}
  \end{figure}
  
 Having the basic constituents $\Phi(-\mat T)$ and $W_{-\mat T}(x)$, we may now illustrate our formulas
The one- and two-sided first passage probabilities, $\p(\tau_x^+<T)$ and $\p(\tau_x^+<\tau_{x-1}^-\wedge T)$, 
obtained using Theorem~\ref{thm:Phi} and Theorem~\ref{thm:W} are presented in Figure~\ref{fig:passage} (Left).
For comparison, we also provide simulated values of these probabilities, and note that these are rather close to the exact values obtained from our formulas. 
  \begin{figure}[h!]
  \includegraphics[width=0.47\textwidth]{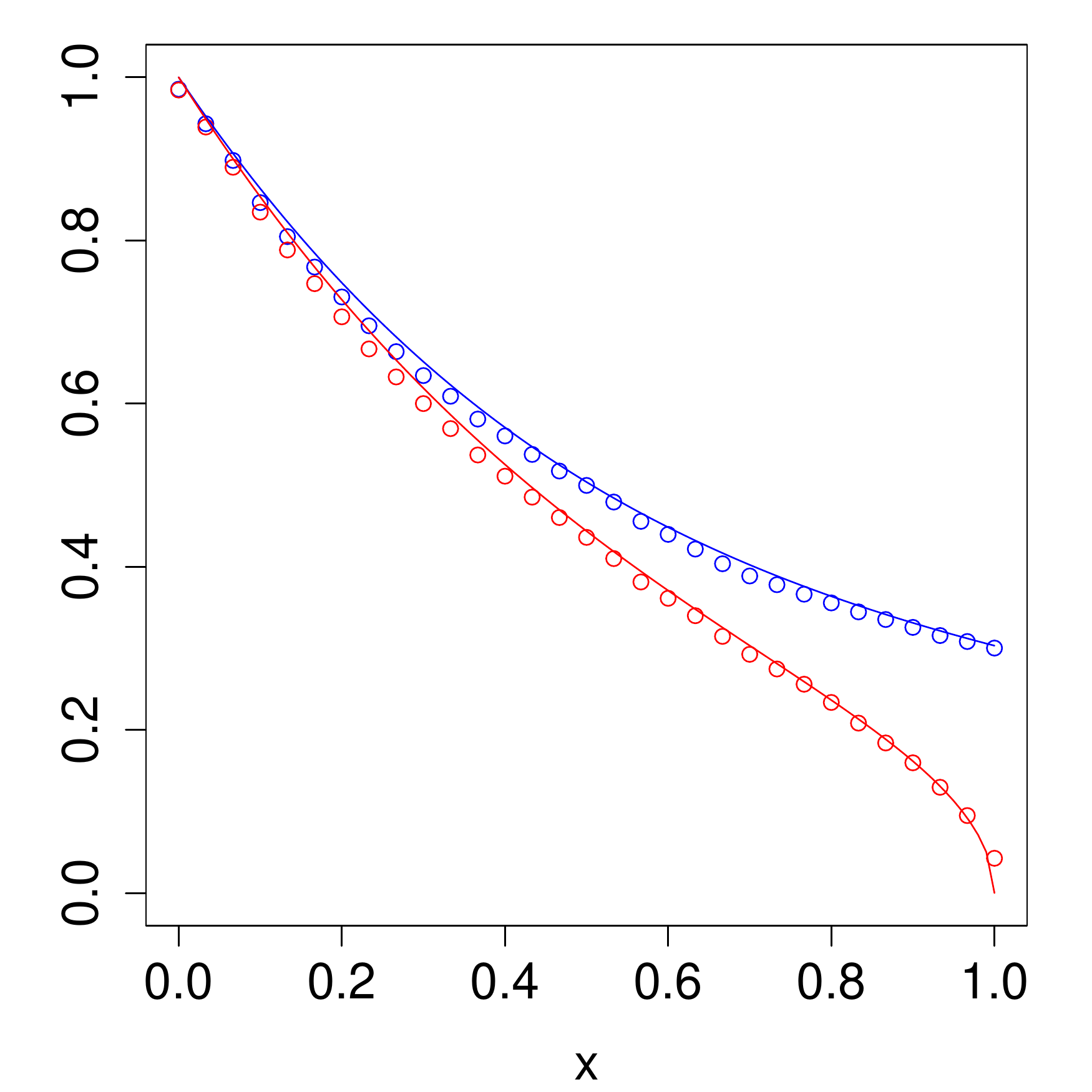}
    \includegraphics[width=0.47\textwidth]{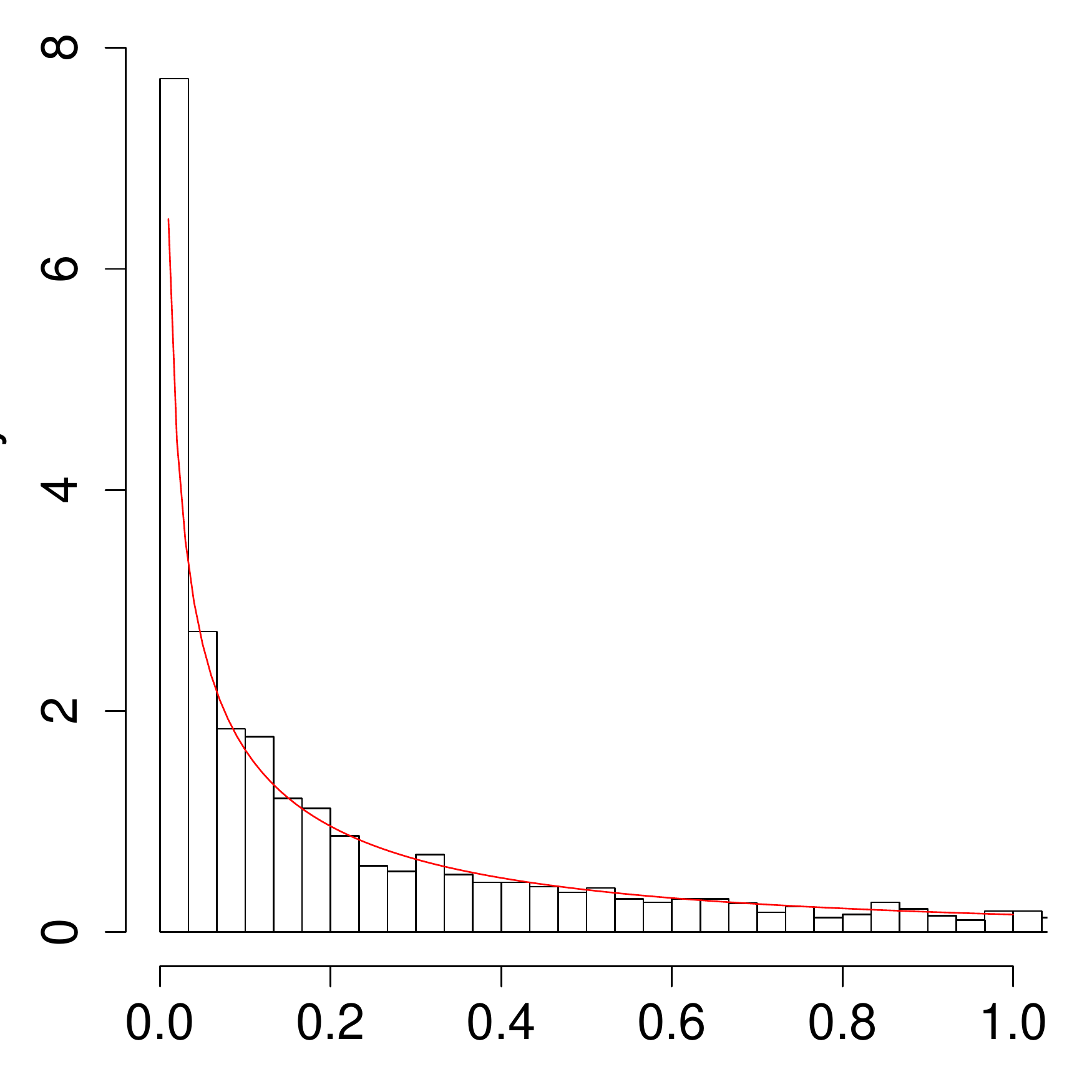}
  \caption{Left: probabilities $\p(\tau_x^+<T)$ (blue) and $\p(\tau_x^+<\tau_{x-1}^-\wedge T)$ (red) for $x\in[0,1]$; simulated values in dots. Right: histogram of $-\underline X_T$ and its density in red; truncated to the interval $[0,1]$.}
  \label{fig:passage}
  \end{figure}
  
Next, we consider the density of $-\underline X_T$ which can be easily obtained from~\eqref{eq:WHdens}.
Its expression involves $W'_{-\mat T}(x)$ and we refer to Section~\ref{sec:stable} for the corresponding formula.
In Figure~\ref{fig:hist} (Right) we plot this density over a histogram of simulated values of~$-\underline X_T$.
Again we observe an agreement between the exact formula and simulations.

\section{ME inter-observation times: an open problem}
An important application of the fluctuation theory for an exponential time horizon concerns risk processes observed at the epochs of an independent Poisson process, see~\cite{alb_iva_zhou} a references therein for a suite of identities.
Generalization of these identities to the case when observation epochs form an independent renewal  process with ME inter-arrival times is not straightforward, and requires further study.
In the case of PH inter-arrivals one may resort to the existing theory for Markov additive process, which we now demonstrate. For an alternative approach in the case of Erlang interarrival times see~\cite{albrecher2013randomized}.

Let $(\widehat T_i)_{i\geq 1}$ be the epochs of an independent renewal process with $\mbox{PH}(\ba,\mat T)$ inter-arrival times.
Define the ruin time as
$\widehat \tau_0=\inf\{\widehat T_i:X_{\widehat T_i}<0\}$, which corresponds to the first observation of the process while it is negative.
A slight adaptation of~\cite[Thm.\ 4.1]{albrecher2013risk} and its proof to the present setting yields the following result.
\begin{prop}\label{prop:interobservations}
For any $x\geq 0$ and $\mbox{PH}(\ba,\mat T)$ interarrival distribution  there is the formula
\[\p(\tau_x^+<\widehat \tau_0)=\ba e^{-\Phi(-\mat T) x}\Big(\mat I-\int_0^x W_{-\mat T-\mat t\ba}(y)\mat t\ba e^{-\Phi(-\mat T)y}\D y\Big)^{-1}\mat 1.\]
\end{prop}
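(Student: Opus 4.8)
The plan is to reduce this to a result about Markov additive processes (MAPs) and then translate the answer back into the analytic matrix-function language of this paper. Since the inter-arrival times are $\mbox{PH}(\ba,\mat T)$, the renewal process carries an auxiliary Markov chain $J$ on the $p$ phases governing the inter-arrival clock, and the pair $(X_t, J_t)$ — where between observation epochs $J$ evolves as the transient phase chain with sub-intensity $\mat T$ and, at each epoch, is reset according to $\ba$ — is a Markov additive process with the same level process $X$ in every phase. Thus $\widehat\tau_0$ is the first epoch at which the MAP has negative level. First I would recall from \cite{albrecher2013risk} the structure of the proof of their Thm.~4.1: one conditions on the first passage of $X$ over level $x$, uses the strong Markov property of the MAP at $\tau_x^+$ together with the fact that $X$ has no positive jumps (so $X_{\tau_x^+}=x$), and obtains a renewal-type (Markov-renewal) equation for the vector of probabilities $\bv(x)$ whose $j$-th entry is $\p_{(0,j)}(\tau_x^+<\widehat\tau_0)$; solving this equation gives the matrix $\big(\mat I-\int_0^x W_{\cdot}(y)\,\mat t\ba\, e^{-\Phi(-\mat T)y}\D y\big)^{-1}$ sandwiched between $e^{-\Phi(-\mat T)x}$ and $\mat 1$, with the kernel expressed through the MAP scale matrix.

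The key step in that translation is to identify the two MAP objects appearing in the Markov-renewal kernel with the analytic matrices of this paper. By Corollary~\ref{cor:G}, the matrix $-\Phi(-\mat T)$ is exactly the generator $\mat G$ of the phase process sampled at first passage times $(J_{\tau_x^+})_{x\ge 0}$, so the factor $e^{-\Phi(-\mat T)x}$ is the MAP ascending ladder transition matrix. For the scale matrix of the MAP one must be careful: the relevant level process, seen inside a single excursion below $0$ before the next reset, is $X$ killed at the phase clock, but the clock that matters for the Markov-renewal equation has sub-intensity $\mat T+\mat t\ba$ (the full inter-arrival chain including resets) rather than $\mat T$; this is precisely why the proposition features $W_{-\mat T-\mat t\ba}$ and not $W_{-\mat T}$. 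Here I would invoke the identification noted at the end of Section~\ref{sec:scale-section}, namely that for a PH clock the matrix-valued function $W_{-\mat S}(x)$ coincides with the MAP scale matrix built from level process $X$ and clock sub-intensity $\mat S$ (see~\cite{iva_palm}); applying this with $\mat S = \mat T+\mat t\ba$ gives the kernel as stated. Finally, post-multiplying by $\mat t\ba$ encodes the reset of the phase at an observation epoch, and the trailing $\mat 1$ together with the initial $\ba$ averages over the canonical PH initial/terminal structure.

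The main obstacle I anticipate is making the reduction to~\cite{albrecher2013risk} genuinely precise rather than a black-box citation: their setup is phrased for a particular risk model, and one must check that the Markov-renewal argument goes through verbatim for a general spectrally-negative $X$ with the "same level process in each phase'' MAP built from $(\ba,\mat T)$, including the degenerate cases $\tau_0^+=0$ and bounded-variation paths with a drift, and that no irreducibility of $\mat T$ is needed (the analytic tools of Section~\ref{sec:toolkit} do not require it, but the classical MAP references sometimes do). Verifying that the convolution kernel in the Markov-renewal equation is exactly $W_{-\mat T-\mat t\ba}(y)\,\mat t\ba\, e^{-\Phi(-\mat T)y}$ — in particular getting the two different clocks ($\mat T+\mat t\ba$ inside the scale matrix, $\mat T$ inside the ladder exponential) in the right places and confirming the Neumann series $\sum_k \big(\int_0^x W_{-\mat T-\mat t\ba}\,\mat t\ba\, e^{-\Phi(-\mat T)y}\D y\big)^k$ converges so that the inverse is legitimate — is where the real work lies; everything else is bookkeeping with the calculus rules of Lemma~\ref{lem:calculus}.
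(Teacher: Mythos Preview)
Your proposal is correct and matches the paper's approach essentially line for line: the paper does not give a detailed proof but simply states that the result is ``a slight adaptation of~\cite[Thm.\ 4.1]{albrecher2013risk},'' and then makes exactly the two identifications you describe---$-\Phi(-\mat T)$ as the first-passage phase generator via Corollary~\ref{cor:G}, and $W_{-\mat T-\mat t\ba}$ as the MAP scale matrix for the recurrent chain $\mat T+\mat t\ba$ (by comparing transforms). Your elaboration of the Markov-renewal structure and the care about the two different clocks is precisely the content of that ``slight adaptation.''
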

Note that $-\Phi(-\mat T)$ is the transition rate matrix of the first passage Markov chain for the model with termination according to the given PH distribution, see Corollary~\ref{cor:G}.
Furthermore, $\mat T+\mat t\ba$ is the transition rate matrix of the Markov chain representing the PH distribution which is restarted according to $\ba$ upon termination.
It corresponds to a recurrent Markov chain, and without real loss of generality we may assume it is irreducible. 
Thus $\mat T+\mat t\ba$ has a single eigenvalue at~0. Nevertheless $W_{-\mat T-\mat t\ba}(y)$ coincides with the scale matrix of our Markov additive process, which can be seen by comparing the transforms.

An extension of Proposition~\ref{prop:interobservations} to an arbitrary positive initial capital can be obtained from the strong Markov property of the associated Markov additive process. Furthermore, the limit $\p(\widehat \tau_0=\infty)$ obtained by $x\to\infty$ has an explicit form in terms of a solution to a certain Sylvester equation, which is derived analogously to~\cite[Thm.\ 4.2]{albrecher2013risk}.
It is an interesting open question on how to generalize the above result to ME inter-arrival times where a probabilistic interpretation is not readily available.

\appendix
\section{Remaining proofs}
\begin{proof}[Proof of Lemma~\ref{lem:Phipsi}]
The first statement is well known, and it readily follows from~\eqref{eq:taux}.
For any $x>0$ a standard application of optional stopping yields
\[\e e^{\theta X_{\tau_x^+\wedge t}-\psi(\theta)\tau_x^+\wedge t}=1,\qquad t,\theta>0.\]
Noting that $X_{\tau_x^+\wedge t}\leq x$ we may extend this identity to all $\theta\in \C_+$. Next, we assume that $\psi(\theta)\in \C_+$ and let $t\to\infty$.
Bu the dominated convergence we readily obtain
\[\e(e^{\theta x-\psi(\theta)\tau_x^+};\tau_x^+<\infty)=\e e^{\theta x-\psi(\theta)\tau_x^+}=1,\]
which is a standard identity but now extended to $\theta\in\C_+$ such that $\psi(\theta)\in\C_+$.
But now 
\[e^{-\theta x}=\e e^{-\psi(\theta)\tau_x^+}=e^{-\Phi(\psi(\theta))x}\]
according to~\ref{eq:taux} continued to~$\C_+$. Thus indeed $\psi(\Phi(\theta))=\theta$ and the uniqueness result is obvious.
\end{proof}

\begin{proof}[Proof of Lemma~\ref{lem:non-zero}]
For any $q,p\in\C$ and $x>0$ there is the identity (see~\cite[Eg.\ (6)]{loeffen14} for $q,p\geq 0$):
\[(q-p)\int_0^xW_p(x-y)W_q(y)\D y=W_q(x)-W_p(x),\]
which readily follows by taking transforms of both sides in $x>0$ and using~\eqref{eq:transform}.
Here we apply Fubini on the left hand side and in that respect we note the bound $|W_q(x)|\leq W_{|q|}(x)$ which readily follows from~\eqref{eq:W_analytic}.
Moreover,  for $y\in[0,x]$ and $q\in \C_+$ we also have
\[\e(e^{-q\tau_{x-y}^+};\tau_{x-y}^+<\tau_{-y}^-)W_q(x)=W_q(y)\]
obtained by analytic continuation.
Thus if $W_q(x)=0$ then $W_q(y)=0$ for all $y\in[0,x]$, but then from the above formula we get $W_p(x)=0$ for all $p\in \C$, contradicting the basic fact that $W_p(x)>0$ for $p>0$.
\end{proof}

\begin{proof}[Proof of Lemma~\ref{lem:Z}]
Use~\eqref{eq:W_analytic} to observe that
\[\int_0^xe^{-\theta y}W_q(y) \D y=\sum_{k\geq 0} q^k\int_0^xe^{-\theta y}W_0^{*(k+1)}(y)\D y,\qquad q\in \C,\]
where the series is absolutely convergent. Thus $Z_q(\theta,x)$ can be analytically continued to $q\in \C$.

The proof of $Z_q(\theta,x)\neq 0$ for $q\in\C_+$ follows the arguments in the proof of Lemma~\ref{lem:non-zero}.
Here we rely upon~\eqref{eq:Zratio} and the identity
\[(p - q)\int_0^aW_p(a-x)Z_q(x, \theta) \D x = Z_p(a,\theta) - Z_q(a, \theta)\]
extending~\cite[Lem.\ 4.1]{alb_iva_zhou} to all $p,q\in \C$.
\end{proof}

%It should hold that
%\[\partial W^{*(k+1)}(x)=W^{*k}(x)W(0)+\int_0^x W^{*k}(y)\partial_+W(x-y)\D y.\]
%\jicomment{Showing differnetiability of convolution by hand is hard. Find a basic reference. }
%\mbcomment{$W=W_0$ should be mentioned? A convolution $f\ast g (t)$ is differentiable if either $f$ or $g$ is differentiable. Is $W_0$ differentiable?}

\begin{proof}[Proof of Lemma~\ref{lem:Wdiff}]
Recall that $W^{*k}$ is the $k$th convolution power of the non-decreasing, non-negative scale function $W(x)=W_0(x)$; here we drop the subscript to simplify notation.
We start by observing for any $h>0$ that 
\begin{align}&\frac{W^{*(k+1)}(x+h)-W^{*(k+1)}(x)}{h}\label{eq:Wder}\\
&\quad=\frac{1}{h}\int_x^{x+h}W^{*k}(y)W(x+h-y)\D y+\frac{1}{h}\int_0^{x}W^{*k}(y)(W(x+h-y)-W(x-y))\D y.\nonumber\end{align}
Note that $W^{*k}(x),W(x)$ are non-decreasing functions of $x$ and so the first term is upper bounded by $W^{*k}(x+h)W(h)$.
The second term is upper bounded by
\[W^{*k}(x)\frac{1}{h}\int_0^{x}(W(x+h-y)-W(x-y))\D y\leq W^{*k}(x)\frac{1}{h}\int_x^{x+h}W(y)\D y\leq W^{*k}(x)W(x+h),\]
where the first inequality is obtained by dropping out the term $-\int_0^hW(y)\D y$.
But
\[\sum_{k\geq 0} |q|^k (W^{*k}(x+h)W(h)+W^{*k}(x)W(x+h))<C<\infty\] 
for all $h\in(0,1]$ and hence the dominated convergence theorem can be applied.

%It is left to show the right derivative of $W^{*(k+1)}(x)$ exists and has the stated form. We again look at~\eqref{eq:Wder} and see that the first term is sandwitched between 
%$W^{*k}(x)\int_0^h W(y)\D y/h$ and $W^{*k}(x+h)\int_0^h W(y)\D y/h$, and so its limit is $W^{*k}(x)W(0)$.
%For the second term we only need to provide an upper bound (for all small $h>0$) on $(W(x+h-y)-W(x-y))/h$, 
%which is integrable on $[0,x]$. Such a bound can be constructed using the standard representation~\cite[Eq.\ (31)]{scale_review}:
%\[\frac{W(x+h-y)-W(x-y)}{h}=W(x+h-y)\frac{1}{h}\Big(1-\exp\big(-\int_{x-y}^{x-y+h}n(\overline \varepsilon>t)\D t\big)\Big),\]
%where $n$ is a measure of excursion from the maximum.
%Conclude by noting that $\int_0^hn(\overline \varepsilon>t+x-y)\D t/h\leq n(\overline \varepsilon>x-y)$ and that 
%\[\int_0^x n(\overline \varepsilon>x-y)\D y=\]
\end{proof}

\section*{Acknowledgments}
The second author gratefully acknowledges the financial support of Sapere Aude Starting Grant 8049-00021B ``Distributional Robustness in Assessment of Extreme Risk''.

%\bibliographystyle{abbrv}
%\bibliography{ME_horizon}

\end{document}